\theoremstyle{plain}
    \newtheorem{thm}{Theorem}[section]
    \newtheorem{lem}[thm]   {Lemma}
    \newtheorem{cor}[thm]   {Corollary}
\newcommand{\C}{\mathbb{C}}
\newcommand{\D}{\mathbb{D}}
\newcommand{\R}{\mathbb{R}}
\renewcommand{\H}{\mathcal{H}}
\newcommand{\V}{\mathcal{V}_n}
\newcommand{\Z}{\mathcal{Z}_n}
\newcommand{\T}{\mathcal{T}_n}
\newcommand{\UV}{\mathcal{UV}}
\newcommand{\Br}{\mathfrak{Br}}
\newcommand{\pa}[1]{\left(#1\right)}
\newcommand{\abs}[1]{\left|#1\right|}
\newcommand{\set}[1]{\left\{#1\right\}}
\newcommand{\surf}{\Sigma_{g,1}}
\newcommand{\Sone}{\mathbb{S}^1}
\renewcommand{\S}{\mathcal{S}}
\renewcommand{\phi}{\varphi}
\DeclareMathOperator{\Diff}{Diff}
\DeclareMathOperator{\Sq}{Sq}
\begin{document}

\title[Braid groups and mapping class groups]{Braid groups, mapping class groups and their homology with twisted coefficients}

\author{Andrea Bianchi}

\address{Mathematics Institute,
University of Bonn,
Endenicher Allee 60, Bonn,
Germany
}

\email{bianchi@math.uni-bonn.de}

\date{\today}

\keywords{Mapping class group, framed mapping class group, braid group, twisted coefficients, symplectic coefficients.}
\subjclass[2010]{20F36, 55N25, 55R20, 55R35, 55R37, 55R40, 55R80.}

\begin{abstract}
We consider the Birman-Hilden inclusion
$\phi\colon\Br_{2g+1}\to\Gamma_{g,1}$
of the braid group into the mapping class group of an orientable surface with boundary, and prove that $\phi$ is stably trivial
in homology with twisted coefficients in the symplectic representation $H_1(\Sigma_{g,1})$ of the mapping class group;
this generalises a result of Song and Tillmann regarding homology with constant coefficients. Furthermore we
show that the stable homology of the braid group with coefficients in $\phi^*(H_1(\Sigma_{g,1}))$ has only $4$-torsion.
\end{abstract}

\maketitle

\section{Introduction}
Braid groups have a strong connection with mapping class groups of surfaces. On the one hand the braid group $\Br_n$
on $n$ strands is itself a mapping class group, namely the one associated to the surface $\Sigma_{0,1}^{n}$
of genus $0$ with one (parametrised) boundary component and $n$ (permutable) punctures.

On the other hand
Birman and Hilden show in \cite{BH2} that the group $\Br_{2g+1}$ can be identified with the hyperelliptic mapping class group:
this is a certain subgroup of the mapping class group $\Gamma_{g,1}$ of an orientable
surface of genus $g$ with one parametrised boundary component (see subsection \ref{subsec:hyp}).

It is natural to study the behaviour in homology of the Birman-Hilden inclusion $\phi\colon\Br_{2g+1}\to\Gamma_{g,1}$.
Song and Tillmann \cite{SongTillmann}, and later Segal and Tillmann \cite{SegalTillmann}, show that the map $\phi_*$ is stably trivial
in homology with constant coefficients. More precisely:
\begin{thm}
 \label{thm:ST}
 For any abelian group $A$ the map
 \[
  \phi_*\colon H_k(\Br_{2g+1};A)\to H_k(\Gamma_{g,1};A)
 \]
 is trivial for $k\leq \frac 23 g -\frac 23$.
\end{thm}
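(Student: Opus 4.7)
The plan is to reduce to the stable range via homological stability, and then to analyse the resulting map of infinite loop spaces.

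First I would invoke homological stability: Arnold's stability for braid groups and the Harer--Ivanov--Boldsen stability for mapping class groups yield isomorphisms $H_k(\Br_{2g+1};A)\cong H_k(\Br_\infty;A)$ and $H_k(\Gamma_{g,1};A)\cong H_k(\Gamma_\infty;A)$ in the claimed range $k\leq\tfrac{2}{3}g-\tfrac{2}{3}$. It is necessary to check that the Birman--Hilden inclusions $\phi$ commute, up to conjugation, with the two stabilisation maps (adding a strand on the braid side corresponds, on the surface side, to gluing on a twice-punctured cylinder, and then using the hyperelliptic involution to cap off). This reduces the theorem to the statement that the stabilised map $\phi_*^\infty\colon H_k(\Br_\infty;A)\to H_k(\Gamma_\infty;A)$ vanishes.

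Next I would apply Quillen's plus construction to identify both sides with well-understood infinite loop spaces: by F.~Cohen and May, $\mathbb{Z}\times B\Br_\infty^+\simeq \Omega^2 S^2$; and by the Madsen--Weiss theorem, $\mathbb{Z}\times B\Gamma_\infty^+\simeq \Omega^\infty \mathrm{MTSO}(2)$. After group completion, the Birman--Hilden maps should assemble into an infinite loop map (or at least an $E_2$-map), via the little-$2$-disks action on configuration spaces on one side and pair-of-pants multiplication on surfaces on the other; the hyperelliptic construction is manifestly compatible with these multiplications because disjoint branch configurations yield boundary-connect-sums of hyperelliptic surfaces.

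The main step is to show that the resulting infinite loop map $\Omega^2_0 S^2\to \Omega^\infty_0 \mathrm{MTSO}(2)$ is trivial on homology in the stable range, or equivalently that its adjoint map of spectra vanishes on the relevant homology. Geometrically this expresses the fact that the family of hyperelliptic surfaces parametrised by configurations of branch points on $S^2$ can be extended over a contractible parameter space by capping off the branch locus by discs, which stably trivialises the associated tangential class into $\mathrm{MTSO}(2)$. The main obstacle is making this geometric picture precise at the spectrum level — producing an honest null-homotopy rather than merely a homological vanishing — which is exactly the content of Song--Tillmann's argument and its subsequent refinement by Segal--Tillmann.
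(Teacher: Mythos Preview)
Your overall strategy --- stabilise both sides, group-complete, and then argue that the resulting map of $\Omega^2$-spaces is null --- is the same as the Song--Tillmann/Segal--Tillmann argument that the paper sketches in the introduction. However, your execution diverges from that sketch in two places, and one of them is a genuine gap.

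First, a minor difference: the paper's account uses the \emph{even} maps $\phi^{even}\colon\Br_{2g}\to\Gamma_{g-1,2}$, because these assemble directly into a braided monoidal functor $\coprod_g \Br_{2g}\to\coprod_g\Gamma_{g-1,2}$ and hence, after group completion, into an honest $\Omega^2$-map. Your proposal to work with the odd maps and argue compatibility with stabilisation ``up to conjugation'' is not obviously enough to get an $E_2$-map on the nose; this step would need justification, and it is not how the paper proceeds.

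Second, and this is the real issue: you bring in the Madsen--Weiss theorem and then offer a vague geometric null-homotopy (``capping off the branch locus by discs \ldots\ stably trivialises the tangential class''). None of this is needed, and none of it is what the argument actually is. The paper's punchline is elementary: the $0$-th component of the source is $\Omega^2 S^3$, which is the \emph{free} $\Omega^2$-space on $S^1$; the target $\mathbb{Z}\times(B\Gamma_\infty)^+$ has simply connected components because $H_1(\Gamma_\infty)=0$. Hence the restriction of the $\Omega^2$-map to the generating $S^1\subset\Omega^2 S^3$ is null, and by freeness the whole map is null-homotopic. That is the entire ``main step''. Your proposal never isolates the two ingredients (freeness on $S^1$, simple connectivity of the target) that make the argument work, and the Madsen--Weiss detour obscures rather than helps.
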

The range $k\leq \frac 23 g -\frac 23$ is the best known stable range for the homology with constant coefficients
of the mapping class group (see section \ref{sec:preliminaries} for the precise statement of Harer's stability theorem).

Both proofs of theorem \ref{thm:ST} use certain analogues of the maps $\phi$, namely the maps
\[
 \phi^{even}\colon \Br_{2g}\to\Gamma_{g-1,2}
\]
from a certain braid group on an even number of strands to a mapping class group of a surface with two boundary 
components. The maps $\phi^{even}$ can be put together to form a braided monoidal functor $\coprod_{g\geq 1}\Br_{2g}\to\coprod_{g\geq 1}\Gamma_{g-1,2}$.

Passing to classifying spaces of categories and then taking the group completion, one shows
that the stable map $\Br_{\infty}\to\Gamma_{\infty,2}$
behaves in homology as the restriction on $0$-th components
of a certain $\Omega^2$-map between the following $\Omega^2$-spaces.

The first $\Omega^2$-space is
$\Omega^2S^2$; its $0$-th connected component is the $\Omega^2$-space $\Omega^2 S^3$, which is the free $\Omega^2$-space over $S^1$.

The second $\Omega^2$-space is
\[
\Omega B\pa{\coprod_{g\geq 0}B\Gamma_{g,2}}\simeq \mathbb{Z}\times \pa{B\Gamma_{\infty}}^+,
\]
and in particular it has simply connected components, since $H_1(\Gamma_{\infty,2})=0$. Here $\pa{B\Gamma_{\infty}}^+$ denotes
the Quillen plus construction applied to the classifying space of the group $\Gamma_{\infty}$, which
is the colimit of the groups $\Gamma_{g,1}$ for increasing $g$ along the inclusions $\alpha$ (see subsection \ref{subsec:mcg}).

The map $\phi^{even}\colon\Omega^2 S^2\to \mathbb{Z}\times \pa{B\Gamma_{\infty}}^+$ is nullhomotopic on $\Omega^2S^3$,
because its restriction
to $S^1\subset\Omega^2 S^3$ is nullhomotopic. In particular the induced map in homology $\phi^{even}_*$
is trivial in degree $*>0$.

In \cite{BoT:Embeddings} B\"{o}digheimer and Tillmann generalise this argument to other
families of embeddings of braid groups into mapping class groups.

Our aim is to prove an analogue of theorem \ref{thm:ST} for homology with symplectic twisted coefficients.

\begin{thm}
\label{thm:STtwisted}
Consider the symplectic representation $\H\colon =H_1(\Sigma_{g,1})$, of the mapping class group $\Gamma_{g,1}$, and its
pull-back $\phi^*\H$, which is a representation of $\Br_{2g+1}$. The induced map in homology with twisted coefficients
\[
 \phi_*\colon H_k(\Br_{2g+1};\phi^*\H)\to H_k(\Gamma_{g,1};\H)
\]
is trivial for $k\leq \frac 23 g -\frac 23 -1$.
\end{thm}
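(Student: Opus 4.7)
The plan is to reduce Theorem~\ref{thm:STtwisted} to a constant-coefficient statement by means of the Lyndon--Hochschild--Serre spectral sequence for a universal surface bundle with one marked point, and then to attack this constant-coefficient statement by extending the Song--Tillmann argument to an enriched setting.

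Let $\Gamma_{g,1}^*$ denote the mapping class group of $\surf$ enriched by one marked interior point; this group fits in the Birman extension $\pi_1(\surf)\to\Gamma_{g,1}^*\to\Gamma_{g,1}$. Since $\surf$ has non-empty boundary, it is homotopy equivalent to a wedge of $2g$ circles, so the associated LHS spectral sequence is concentrated in the two rows $q=0,1$, with $E^2_{p,0}=H_p(\Gamma_{g,1})$ and $E^2_{p,1}=H_p(\Gamma_{g,1};\H)$. Only one differential $d_2\colon H_p(\Gamma_{g,1})\to H_{p-2}(\Gamma_{g,1};\H)$ can be nonzero, producing a natural four-term exact sequence
\begin{equation*}
H_{n+1}(\Gamma_{g,1})\xrightarrow{d_2} H_{n-1}(\Gamma_{g,1};\H)\to H_n(\Gamma_{g,1}^*)\to H_n(\Gamma_{g,1})\xrightarrow{d_2} H_{n-2}(\Gamma_{g,1};\H).
\end{equation*}
Pulling back this extension along $\phi$ produces an analogously enriched braid group $\Br_{2g+1}^*$ together with a lift $\tilde\phi\colon\Br_{2g+1}^*\to\Gamma_{g,1}^*$; geometrically, an element of $\Br_{2g+1}^*$ is a braid on $2g+1$ strands equipped with an unordered pair of lifted strands in the upstairs surface. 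One obtains a parallel four-term sequence and a morphism between the two.

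Next I would extend the Song--Tillmann argument to the enriched map $\tilde\phi$. The maps $\tilde\phi$ for varying $g$ should assemble into a braided monoidal functor parallel to the even-indexed construction used in \cite{SongTillmann,SegalTillmann}. Upon passage to classifying spaces and group completion, one expects a commutative square of $\Omega^2$-spaces whose top row is the Song--Tillmann map $\Omega^2 S^2\to\mathbb{Z}\times(B\Gamma_\infty)^+$ and whose bottom row is the induced map between the stabilised classifying spaces of the enriched groups. The Song--Tillmann nullhomotopy of the top row on $\Omega^2 S^3$ should propagate through this square to a nullhomotopy of the bottom row in the same homological range (possibly up to a shift of one degree from the extra marked point). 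Granting both $\phi_*\colon H_*(\Br_{2g+1})\to H_*(\Gamma_{g,1})$ and $\tilde\phi_*\colon H_*(\Br_{2g+1}^*)\to H_*(\Gamma_{g,1}^*)$ vanish in the appropriate range, a diagram chase through the morphism of four-term sequences produces the desired vanishing of $\phi_*$ on $H_{n-1}(\Br_{2g+1};\phi^*\H)$; the loss of one degree in the statement of Theorem~\ref{thm:STtwisted} is precisely the shift induced by this chase.

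The main obstacle is twofold. First, transporting the Song--Tillmann nullhomotopy to the enriched setup requires a careful model for the pointed $\Omega^2$-spaces and verification that the marked-point structure is compatible with the braided-monoidal and $\Omega^2$-structures used by Song and Tillmann. Second, and more subtly, the naive five-lemma chase shows only that the image of $\phi_*$ is contained in the image of the differential $d_2\colon H_{n+1}(\Gamma_{g,1})\to H_{n-1}(\Gamma_{g,1};\H)$; to deduce actual vanishing one must either prove that this $d_2$ is trivial in the stable range (a Madsen--Weiss-flavoured degeneration statement) or supplement the chase with further information, for example by showing that the enriched LES splits stably on both sides so that $H_n(\Gamma_{g,1}^*)$ and $H_n(\Br_{2g+1}^*)$ decompose as direct sums of ordinary and twisted homology.
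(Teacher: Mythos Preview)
Your proposal has genuine gaps, which you yourself identify: the extension of the Song--Tillmann $\Omega^2$-nullhomotopy to the marked-point setting is only conjectural (``should assemble'', ``one expects''), and even granting it, the diagram chase shows only that the image of $\phi_*$ lands in the image of $d_2\colon H_{n+1}(\Gamma_{g,1})\to H_{n-1}(\Gamma_{g,1};\H)$, not that it vanishes. Neither obstacle is minor. The second is particularly serious: this $d_2$ is not obviously zero, and establishing either its vanishing or a stable splitting of the four-term sequence would require input at least as delicate as the theorem itself.

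The paper's argument sidesteps both obstacles by two simple moves. First, instead of the absolute LHS sequence for $\Gamma_{g,1}^*$, it uses the Serre spectral sequence of the \emph{pair} of bundles $(\S_{g,1},B\Gamma_{g,1})\to B\Gamma_{g,1}$, whose fibre pair $(\surf,*)$ has reduced homology concentrated in degree~$1$; the spectral sequence therefore has a single nonzero row and collapses immediately, giving a clean identification $H_k(\Gamma_{g,1};\H)\cong H_{k+1}(\S_{g,1},B\Gamma_{g,1})$ with no differential to control. Second, rather than invoking the Song--Tillmann machine, the paper observes that the pulled-back bundle $\phi^*\S_{g,1}\to C_{2g+1}(\D)$ carries a nowhere-vanishing vertical vector field extending the canonical outward-pointing one over the boundary section (either by an explicit holomorphic formula on the family of hyperelliptic curves, or because $\phi$ factors through the framed mapping class group). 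Such a vector field is precisely a lift of the map of pairs $(\phi^*\S_{g,1},B\Br_{2g+1})\to(\S_{g,1},B\Gamma_{g,1})$ through $(\UV_{g,1},B\Gamma_{g,1})$, and since $\UV_{g,1}\simeq B\Gamma_{g,2}$, Harer stability for $\beta\colon\Gamma_{g,1}\to\Gamma_{g,2}$ gives $H_{k+1}(\UV_{g,1},B\Gamma_{g,1})=0$ in the stated range. The theorem follows. In particular the paper uses neither the Song--Tillmann nullhomotopy nor any stabilisation in~$g$ or in the number of strands; only the ``boundary-component'' part of Harer stability is needed.
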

Our proof is more elementary and only relies on a weak version of Harer's stability theorem: in
particular we will not need to stabilise with respect to the number of strands or the genus.

We also obtain a result concerning the homology $H_*(\Br_{2g+1};\phi^*\H)$ on its own:
\begin{thm}
 \label{thm:fourtorsion}
The homology $H_*(\Br_{2g+1};\phi^*\H)$ is $4$-torsion, i.e. every element vanishes when multiplied
by $4$.
\end{thm}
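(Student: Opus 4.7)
The plan is to exhibit a central element of $\Br_{2g+1}$ whose coefficient action on $\phi^*\H$ is multiplication by $-1$, and then to invoke the classical principle that a central element induces the identity on group homology with twisted coefficients. Precisely, for any group $G$, any $G$-module $M$ and any central $c \in Z(G)$, the endomorphism of $H_*(G;M)$ induced by the coefficient action of $c$ on $M$ is the identity; this is the standard consequence of the fact that, for every $g \in G$, the pair consisting of conjugation by $g$ on $G$ and multiplication by $g$ on $M$ induces the identity on $H_*(G;M)$, so that when $g$ is central the coefficient action alone must be the identity.

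I would take $c$ to be the full twist $\Delta^2 \in Z(\Br_{2g+1})$, the generator of the center. The key computation is that $\phi(\Delta^2)$ acts on $\phi^*\H$ as $-\mathrm{id}$. Under the Birman-Hilden correspondence, $\phi(\Delta^2)$ is the central element of the hyperelliptic mapping class group whose square equals the boundary Dehn twist $T_{\partial\surf}$: geometrically, it lifts the $2\pi$-rotation of the disk to $\surf$ as the hyperelliptic involution $\iota$ (which reverses each vanishing cycle and hence acts by $-I$ on $\H$), possibly composed with a Dehn twist in a collar of $\partial\surf$ (which acts trivially on $\H$ because $\partial\surf$ is nullhomologous). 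The sign is easily confirmed in the base case $g=1$: writing $\sigma_i \mapsto T_{c_i}$ for the transvections along vanishing cycles in a symplectic basis of $\H \cong \mathbb{Z}^2$, one directly computes $(T_{c_1}T_{c_2})^3 = -I$ in $SL_2(\mathbb{Z})$.

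Combining the two ingredients, the centrality principle applied to $c = \Delta^2$ with $\alpha = -\mathrm{id}$ yields that multiplication by $-1$ induces the identity on $H_*(\Br_{2g+1};\phi^*\H)$, and therefore multiplication by $2$ is the zero map. Thus $H_*(\Br_{2g+1};\phi^*\H)$ is in fact $2$-torsion, which a fortiori proves the $4$-torsion claim of the theorem. The only delicate point is the identification of $\phi(\Delta^2)$ with an element that acts by $-I$ rather than by $+I$: the naive guess $\phi(\Delta^2) = T_{\partial\surf}$ would give the wrong (trivial) action on $\H$, so one must verify that $\phi(\Delta^2)$ is instead a genuine square root of $T_{\partial\surf}$ in the hyperelliptic mapping class group, differing from the obvious lift of the disk boundary twist by the deck involution of the hyperelliptic double cover.
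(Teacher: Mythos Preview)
Your argument is correct and in fact proves the stronger statement that $H_*(\Br_{2g+1};\phi^*\H)$ is $2$-torsion, which is the result later established by Callegaro--Salvetti and mentioned in the introduction of the paper. The two ingredients are sound: $\Delta^2$ generates the center of $\Br_{2g+1}$, and the ``conjugation is trivial on group homology'' principle reduces the problem to checking that $\Delta^2$ acts by $-I$ on $\phi^*\H$. Your $g=1$ verification is correct, and the general case follows by the same direct computation: with respect to the basis $[c_1],\dots,[c_{2g}]$ of $\H$ one has $(D_1\cdots D_{2g})_*[c_j]=-[c_{j+1}]$ for $j<2g$ and $(D_1\cdots D_{2g})_*[c_{2g}]=[c_1]+\dots+[c_{2g}]$, from which $(D_1\cdots D_{2g})_*^{2g+1}=-I$ follows by iteration. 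You should spell this out rather than relying on the heuristic ``hyperelliptic involution composed with a boundary-supported map'', since $J$ itself is not in $\Gamma_{g,1}$ and making that heuristic precise is more work than the linear-algebra check.

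The paper's proof is entirely different and considerably more elaborate. It works with the explicit algebraic model $\V\to C_n(\D)$ for $\phi^*\S_{g,1}$, decomposes $\V$ into the branch locus $\Z\simeq C_{n-1,1}(\D)$ and its complement $\T$, and runs a Mayer--Vietoris argument. The crucial input is that the deck transformation $\varepsilon$ of the double cover $\T\to C_{n,1}(\overline\D)$ is homotopic to the identity (Lemma~\ref{lemma:varepsilon=id}), proved by rotating the disc by $2\pi$; this forces the transfer and pushforward to interact well enough to extract $4$-torsion. It is worth noting that this key lemma is, at bottom, a geometric avatar of your observation: the $2\pi$ rotation is $\Delta^2$, and its lift to the double cover is $\varepsilon$, which is the fibrewise hyperelliptic involution. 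So the two proofs share a common geometric seed, but you exploit it far more efficiently via the center--kills mechanism, obtaining the optimal torsion exponent with a one-line homological algebra argument, whereas the paper's approach yields the weaker bound $4$ but develops along the way a detailed understanding of the spaces $\V$, $\T$, $\Z$ and their homological interrelations.
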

The homology $H_*(\Br_{2g+1};\phi^*\H)$ arises in a natural way as a direct summand of $H_*(\phi^*\S_{g,1})$. Here $\S_{g,1}$ denotes
the total space of the tautological $\Sigma_{g,1}$-bundle $\S_{g,1}\to B\Gamma_{g,1}$ over the classifying space of the
mapping class group, and $\phi^*\S_{g,1}$ is its pull-back on the braid group, or, as we have seen, on the hyperelliptic mapping class group.
This follows from the fact that every $\surf$-bundle has a section \emph{at the boundary} (see section \ref{sec:preliminaries}).

This article contains the main results of my Master Thesis \cite{Bianchi}. Recently Callegaro
and Salvetti (\cite{CallegaroSalvetti}) have computed explicitly the homology $H_*(\Br_{2g+1};\phi^*\H)$,
showing that it has even only $2$-torsion; in another work \cite{CallegaroSalvetti2} the same authors have
studied the analogue problem for totally ramified $d$-fold branched coverings of the disc.
Their results are partially based on results of my Master Thesis, which are discussed in this article.

I would like to thank Ulrike Tillmann, Mario Salvetti and Filippo
Callegaro for their supervision, their help and their encouragement during the preparation of my Master
Thesis, and Carl-Friedrich Bödigheimer for helpful discussions and detailed comments
on a first draft of this article.

\section{Preliminaries}
\label{sec:preliminaries}
In this section we recall some classical facts about braid groups and mapping class groups.
\subsection{Braid groups}
Let $\D=\set{z \,|\,\abs{z}<1}\subset\C$ be the open unit disc, and let
\[
 F_n(\D)=\set{(z_1,\dots,z_n)\in\D^n\;|\;z_i\neq z_j \;\forall i\neq j}
\]
be the \emph{ordered configuration space} of $n$ points in $\D$. There is a natural, free action of
$\mathfrak{S}_n$ on $F_n(\D)$, which permutes the labels of a configuration. The quotient space
is denoted by $C_n(\D)$ and is called the \emph{unordered configuration space} of $n$ points in $\D$.

Artin's braid group $\Br_n$ is defined as the fundamental group $\pi_1(C_n(\D))$; recall that
$C_n(\D)$ is an aspherical space (see \cite{FadellNeuwirth}), and hence a classifying space for $\Br_n$.

The braid group $\Br_n$ has a presentation (see \cite{Artin}) with generators $\sigma_1,\dots,\sigma_{n-1}$ and relations:
\begin{itemize}
 \item $\sigma_i\sigma_j=\sigma_j\sigma_i$ for $|i-j|\geq 2$;
 \item $\sigma_i\sigma_j\sigma_i=\sigma_j\sigma_i\sigma_j$ for $|i-j|=1$.
\end{itemize}

The space $C_n(\D)$ has a natural structure of complex manifold, with local coordinates $(z_1,\dots,z_n)$,
the positions of the points in the configuration. To stress that these local coordinates do not have a preferred
order, we will also write $\set{z_1,\dots,z_n}$.

\subsection{Mapping class groups}
\label{subsec:mcg}
Let $\Sigma_{g,m}$ be a smooth, oriented, compact surface of genus $g$ with $m\geq 1$
\emph{parametrised} boundary components.

We will be mainly interested in the case $m=1$, but we will need also the case $m=2$ to present
our results.

A parametrisation of the boundary is a diffeomorphism
$\partial\Sigma_{g,m}\cong \set{1,\dots,m}\times\Sone$, where $\Sone\in\C$ is the unit circle;
this diffeomorphism should induce on each boundary component the same orientation as the one
induced by the (oriented) surface $\Sigma_{g,m}$ on the boundary.

We choose as basepoint for $\Sigma_{g,m}$ the point $*\in\partial\Sigma_{g,m}$
corresponding to $(1,1)\in\set{1,\dots,m}\times\Sone$.

We consider the group $\Diff_{g,m}$ of diffeomorphisms $f\colon\Sigma_{g,m}\to\Sigma_{g,m}$ for which
there exists a collar neighborhood $U\subseteq \Sigma_{g,m}$ of the boundary $\partial\Sigma_{g,m}$ such that $f|_U$
is the identity.

This is a topological group with the Whitney $C^{\infty}$-topology. Note that
a diffeomorphism that fixes a neighborhood of the boundary (in particular an open set of $\Sigma_{g,m}$)
must be orientation-preserving. A result by Earle and Schatz (\cite{EarleSchatz}) ensures that $\Diff_{g,m}$
has contractible connected components, so the tautological map
\[
\Diff_{g,m}\to\pi_0(\Diff_{g,m})
\]
is a homotopy equivalence. The second term $\pi_0(\Diff_{g,m})$ is the discrete group of connected components
of $\Diff_{g,m}$: it is called the mapping class group of $\Sigma_{g,m}$ and it is denoted by $\Gamma_{g,m}$.

By taking classifying spaces we obtain a homotopy equivalence $B\Diff_{g,m}\simeq B\Gamma_{g,m}$.

The tautological action of $\Diff_{g,m}$ on $\Sigma_{g,m}$ yields, through the Borel construction, the map
\[
 E\Diff_{g,m}\times_{\Diff_{g,m}}\Sigma_{g,m}\to B\Diff_{g,m}=E\Diff_{g,m}/\Diff_{g,m}.
\]
This map is a fiber bundle with fiber $\Sigma_{g,m}$.
The pullback bundle along the inverse homotopy equivalence $B\Gamma_{g,m}\to B\Diff_{g,m}$ is denoted by
$p\colon \S_{g,m}\to B\Gamma_{g,m}$; we have a pull-back square
\[
 \begin{tikzcd}
  \S_{g,m} \ar[r,"\simeq"] \ar[d,"p"] & E\Diff_{g,m}\times_{\Diff_{g,m}}\Sigma_{g,m}\ar[d] \\
   B\Gamma_{g,m}\ar[r,"\simeq"] & B\Diff_{g,m}.
 \end{tikzcd}
\]

The fiber of $p$ is a surface diffeomorphic to $\Sigma_{g,m}$; boundaries of fibers are moreover equipped with
a parametrisation, so that the subspace $\partial\S_{g,m}$ given by the union of all boundaries of fibers is
canonically homeomorphic to $B\Gamma_{g,1}\times \pa{\set{1,\dots,m}\times\Sone}$; each boundary component
of each fiber of $p$ inherits the same orientation from the oriented fiber ($\simeq\Sigma_{g,m}$) to which it
belongs and from its identification with $\Sone$ along the aforementioned canonical homeomorphism.

The bundle $p$ is universal among bundles with all these
properties: if $X$ is a paracompact space and $\tilde p\colon\tilde S\to X$ is a $\Sigma_{g,m}$-bundle
over $X$ with a given homeomorphism between the subspace $\partial \tilde S$ of boundaries of fibers
with $X\times\pa{\set{1,\dots,m}\times\Sone}$, such that each boundary component of each fiber of $\tilde p$
inherits the same orientation
from the fiber to which it belongs and from the aforementioned homeomorphism,
then there is up to homotopy a unique classifying map $\psi\colon X\to B\Gamma_{g,m}$ such that $\tilde p\simeq \psi^*p$
as bundles with parametrised boundaries of fibers.

The bundle $p$ admits a global section \emph{at the boundary} $s_0\colon B\Gamma_{g,m}\to \S_{g,m}$,
obtained by choosing the basepoint of each fiber (i.e. the point corresponding to $(1,1)\in\set{1,\dots,m}\times\Sone$ under
the parametrisation). By abuse of notation, we will also see $B\Gamma_{g,m}=s_0(B\Gamma_{g,m})$ as a subspace
of $\S_{g,m}$.

Fibers of $p$ are smooth surfaces, and we can assemble together their tangent bundles to get a vector bundle
$\bar p^v\colon \mathcal{V}_{g,m}\to \S_{g,m}$ with fiber $\R^2$, called the \emph{vertical tangent bundle}.
Choosing a Riemannian metric on $\bar p^v$ and considering on each vector space its unit circle,
we can also define the \emph{unit vertical tangent bundle}
$p^v\colon \UV_{g,m}\to \S_{g,m}$, with fiber $\Sone$.

We can define a section of $p^v$ over the subspace
$\partial \S_{g,m}\simeq B\Gamma_{g,m}\times\pa{\set{1,\dots,m}\times\Sone}$: we assign to each point on the boundary
of some fiber of $p$ the unit vector which is tangent to that fiber, is orthogonal to the boundary of that fiber and points
outwards.
We will actually only need the restriction of this section to $B\Gamma_{g,m}=s_0(B\Gamma_{g,m})\subset\partial \S_{g,m}$:
we call it $s_0^v\colon B\Gamma_{g,m}=s_0(B\Gamma_{g,m})\to \UV_{g,m}$, and again by abuse of notation we see $B\Gamma_{g,m}$
as a subspace of $\UV_{g,m}$. See the following diagram

\[
\begin{tikzcd}[column sep=6em,row sep=3em]
  s_0^v(B\Gamma_{g,m}) \ar[d, equal]\ar[r,hook]
  & \UV_{g,m}\ar[d,"p^v"]\\
  s_0(B\Gamma_{g,m})\ar[r,hook]\ar[dr,equal]
  & \S_{g,m}\ar[d,"p"]\\
  & B\Gamma_{g,m}
 \end{tikzcd}
\]

The previous constructions are natural with respect to pullbacks: if $\tilde p\colon\tilde S\to X$ is a
$\Sigma_{g,m}$-bundle over a paracompact space $X$, we have a section \emph{at the boundary} $\tilde s_0\colon X\to \tilde S$,
a unit vertical tangent bundle $\tilde p^v\colon \tilde{\UV}\to \tilde S$ and a \emph{pointing outward}
section $\tilde s^v_0\colon X=\tilde s_0(X)\to \tilde{\UV}$.

We now restrict to the cases $m=1,2$ and construct a map $\beta\colon\Gamma_{g,1}\to\Gamma_{g,2}$. First
we decompose $\Sigma_{g,2}$ as the union of $\surf$ and a pair of pants $\Sigma_{0,3}$ along a boundary component.
Each diffeomorphism of $\surf$ fixing a collar neighborhood of $\partial\surf$ extends to a diffeomorphism
of $\Sigma_{g,2}$, by prescribing the identity map on $\Sigma_{0,3}$: we obtain a homomorphism
$\bar\beta\colon\Diff_{g,1}\to\Diff_{g,2}$, and the homomorphism $\beta$ is $\pi_0(\bar\beta)$. See figure \ref{fig:glue}.

\begin{figure}
 \centering
 \includegraphics[scale=0.7]{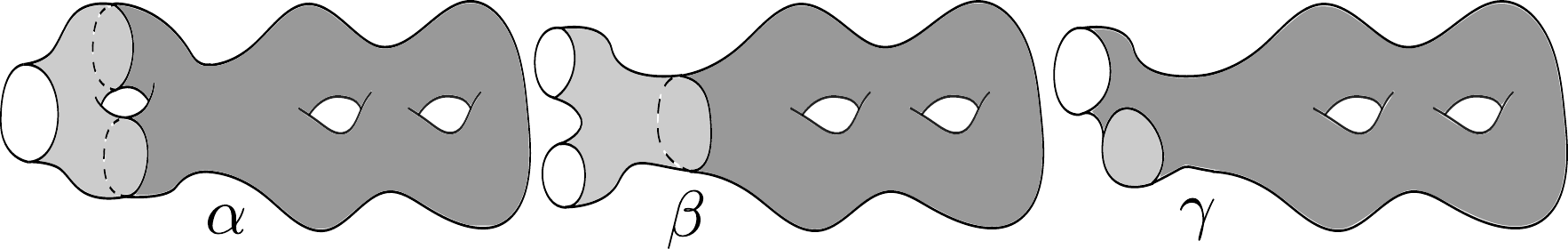}
 \caption{Glueing surfaces in different ways yields homomorphisms $\alpha$,$\beta$ and $\gamma$ between mapping class groups.}
 \label{fig:glue}
\end{figure}

Conversely, we can construct a map $\gamma\colon\Gamma_{g,2}\to \Gamma_{g,1}$ as follows. First, we decompose $\surf$ as
the union of $\Sigma_{g,2}$ and a disc $\Sigma_{0,1}$ along a boundary component. Each diffeomorphism of $\Sigma_{g,2}$
fixing a collar neighborhood of $\partial\Sigma_{g,2}$
extends to $\Sigma_{g,1}$, by prescribing the identity map on $\Sigma_{0,1}$: we obtain a homomorphism
$\bar\gamma\colon\Diff_{g,2}\to\Diff_{g,1}$, and the homomorphism $\gamma$ is $\pi_0(\bar\gamma)$.

The composition $\gamma\circ\beta\colon\Gamma_{g,1}\to\Gamma_{g,1}$ is essentially the identity: we are glueing a cylinder
$\Sigma_{0,2}\simeq\Sone\times[0,1]$ to $\partial\surf$ to obtain a surface that is again diffeomorphic to $\Sigma_{g,1}$;
moreover there is a preferred isotopy class of diffeomorphisms $\Sigma_{g,1}\to \Sigma_{g,1}\cup_{\Sone}\Sone\times[0,1]$,
represented by the evaluation at time 1 of any extension of the tautological isotopy
$\partial\surf\times [0,1]\overset{\simeq}{\to} \Sone\times[0,1]\subset\surf\cup_{\Sone}\Sone\times[0,1]$,
starting from the inclusion $\surf\subset\surf\cup_{\Sone}\Sone\times[0,1]$. We can thus identify the mapping class
group of $\Sigma_{g,1}$ and the mapping class group of $\Sigma_{g,1}\cup_{\Sone} \Sone\times[0,1]$, and under
this identification the map $\gamma\circ\beta$ is the identity of $\Gamma_{g,1}$.

Finally, consider the following morphism
of groups $\alpha\colon\Gamma_{g,2}\to\Gamma_{g+1,1}$: this time we obtain $\Sigma_{g+1,1}$ glueing
$\surf$ and pair of pants $\Sigma_{0,3}$ along two boundary components. Again we
get first a homomorphism $\Diff_{g,2}\to\Diff_{g+1,1}$ and then a homomorphism $\alpha$ between the corresponding
mapping class groups.

We will state Harer's stability theorem in a form that suffices for our purposes
(see \cite{Harer} for the original theorem and 
\cite{Boldsen, ORW:resolutions_homstab} for the improved stability ranges).
\begin{thm}[Harer]
\label{thm:Harer}
Let $A$ be an abelian group. The maps $\alpha,\beta,\gamma$ described above induce isomorphisms in homology in a certain range:
\[
 \alpha_*\colon H_k(\Gamma_{g,2};A)\cong H_k(\Gamma_{g+1,1};A)\quad \mbox{for }k\leq \frac 23 g-\frac 23;
\]
\[
 \beta_*\colon H_k(\Gamma_{g,1};A)\cong H_k(\Gamma_{g,2};A)\quad \mbox{for }k\leq \frac 23 g;
\]
\[
 \gamma_*\colon H_k(\Gamma_{g,2};A)\cong H_k(\Gamma_{g,1};A)\quad \mbox{for }k\leq \frac 23 g.
\]
\end{thm}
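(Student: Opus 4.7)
The plan is to follow the classical semi-simplicial resolution approach initiated by Harer and refined by Ivanov, Boldsen, and Randal-Williams--Wahl. For each of the three stabilization maps $\alpha$, $\beta$, $\gamma$, I would introduce a simplicial complex of arcs on the relevant surface on which the larger mapping class group acts, with the property that the stabilizer of a $p$-simplex is essentially a mapping class group of the surface obtained by cutting along the arcs represented by the simplex. The homology of the group is then compared with the equivariant homology of the complex via the Borel construction spectral sequence
\[
 E^1_{p,q} = \bigoplus_{[\sigma]} H_q(\mathrm{stab}(\sigma); A_{\sigma}) \;\Rightarrow\; H_{p+q}(\Gamma_{g,?}; A),
\]
where the sum ranges over orbits of $p$-simplices.

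Concretely, for $\gamma\colon\Gamma_{g,2}\to\Gamma_{g,1}$, which caps a boundary circle by a disc, and for $\beta\colon\Gamma_{g,1}\to\Gamma_{g,2}$, which splits off a pair of pants, the relevant complex is an arc complex of isotopy classes of properly embedded disjoint arcs with both endpoints on a single boundary component (or on the basepoint). For $\alpha\colon\Gamma_{g,2}\to\Gamma_{g+1,1}$ one uses the complex of arcs joining the two distinct boundary components of $\Sigma_{g,2}$. In each case, cutting along a $p$-simplex either drops the genus or alters the number of boundary components by a controlled amount, so an induction on $g$ applied to the $E^1$-page yields the stabilization statement in a range dictated by the connectivity of the complex; the alternating sum of face maps collapses to an iterate of the stabilization map under consideration, which is what makes the induction propagate.

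The main obstacle, as always in Harer-type theorems, is the connectivity estimate for the arc complex: to reach the slope $\tfrac{2}{3}$ one needs these complexes to be roughly $\lfloor\tfrac{3}{2}k\rfloor$-connected at genus $k$. This is established by a Hatcher--Vogtmann-style surgery argument: starting from a singular sphere in the complex, one makes it transverse to a fixed reference arc and performs cut-and-paste surgeries along innermost intersection disks to reduce the total intersection number, with a secondary induction on the combinatorics of the remaining configuration. The sharp form of this surgery, due to Boldsen (and revisited via the resolution-of-singularities machinery of Randal-Williams--Wahl), is the delicate geometric heart of the theorem; once it is available, the spectral sequence manipulations and the double induction on homological degree and genus are essentially formal, and one extracts the three stated ranges by a direct diagram chase on the $E^\infty$-page.
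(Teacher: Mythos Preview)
The paper does not prove this theorem at all: it is quoted as a background result, with references to Harer's original paper and to Boldsen and Randal-Williams--Wahl for the improved ranges. So there is no ``paper's own proof'' to compare against.

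Your sketch is a fair outline of the arc-complex/spectral-sequence strategy used in those references, and in that sense it is appropriate as a pointer to the literature. But as a self-contained proof it is only a plan: the decisive ingredient is the precise connectivity bound for the relevant arc complexes, and you merely name the Hatcher--Vogtmann surgery idea without carrying it out. Since this paper treats Harer stability as a black box, the correct thing to do here is exactly what the paper does: state the result and cite \cite{Harer, Boldsen, ORW:resolutions_homstab}, rather than attempt a proof.
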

Theorem \ref{thm:ST} relies on the full statement of theorem \ref{thm:Harer}, but
in the proof of theorem \ref{thm:STtwisted}
we will only need
homological stability for the maps $\beta$ and $\gamma$:
these are the stabilisation maps that change the number of boundary components but not the genus.

We will also need the following classical result (see \cite{FarbMargalit}, propositions 3.19 and 4.6)
\begin{thm}
 The space $\UV_{g,1}$ is a classifying space for $\Gamma_{g,2}$, i.e. it is homotopy equivalent to
 $B\Gamma_{g,2}$.
 
 The map $s_0^v\circ s_0\colon B\Gamma_{g,1}\to\UV_{g,1}$ induces the map $\beta$
 on fundamental groups.
 
 The map $p\circ p^v\colon \UV_{g,1}\to B\Gamma_{g,1}$ induces the map $\gamma$ on fundamental groups.
\end{thm}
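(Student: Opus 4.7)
The plan is to realize $\UV_{g,1}$ as a classifying space for $\Gamma_{g,2}$ by constructing a tautological $\Sigma_{g,2}$-bundle with parametrized boundary on it via real oriented blow-up. First I would replace $\UV_{g,1}$ by its deformation retract $\mathring{\UV}_{g,1}$ lying over the open sub-bundle $\mathring{\S}_{g,1}\subset\S_{g,1}$ obtained by pushing off a collar of the boundary in each fiber; this is a homotopy equivalence because the retraction is fiberwise, and it removes the inconvenience of points sitting on $\partial\S_{g,1}$. The pullback of $p$ along $p\circ p^v\colon \mathring{\UV}_{g,1}\to B\Gamma_{g,1}$ then carries a tautological section $\sigma$ hitting an interior point of each fiber, together with a tautological unit tangent vector $v$ along $\sigma$. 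I would perform the real oriented blow-up along $\sigma$, replacing each point $\sigma(x)$ by the circle of unit tangent directions at that point: the new fibers are diffeomorphic to $\Sigma_{g,2}$, with the new boundary parametrized by $v$ and the pre-existing one parametrized by pullback from $\S_{g,1}$. The universal property of $p\colon\S_{g,2}\to B\Gamma_{g,2}$ recalled in the preliminaries then furnishes a classifying map $\psi\colon\mathring{\UV}_{g,1}\to B\Gamma_{g,2}$, well-defined up to homotopy.

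Next I would show $\psi$ is a weak equivalence. By construction, capping off the new boundary circle by a disc undoes the blow-up and recovers the pullback of $p$, so $\gamma\circ\psi\simeq p\circ p^v$; hence $\psi$ is a map of fibrations over $B\Gamma_{g,1}$, with source fiber the unit tangent bundle $T^1\mathring\surf$ and target fiber a $K(\pi_1(T^1\surf),1)$ (the latter by the Birman exact sequence for $\gamma$). On fibers $\psi$ is the Birman ``point-pushing-with-framing'' classifying map, inducing an isomorphism on $\pi_1$; since both fibers are aspherical, this suffices on all homotopy groups. A five-lemma comparison on the long exact sequences of homotopy — which terminate because $B\Gamma_{g,1}$ and $T^1\surf$ are aspherical — gives that $\psi$ is a weak, and hence genuine, homotopy equivalence.

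Finally I would verify the two assertions about the structure maps. The claim for $p\circ p^v$ is just the compatibility $\gamma\circ\psi\simeq p\circ p^v$ already observed. For the claim about $\beta$, the map $s_0^v\circ s_0$ lands in the boundary locus, but a small isotopy pushing the marked point inward against $-v$ makes it factor through $\mathring{\UV}_{g,1}$. The resulting pullback of the blow-up bundle is the $\Sigma_{g,2}$-bundle on $B\Gamma_{g,1}$ obtained from $\S_{g,1}$ by excising a small disc embedded near the boundary section; an explicit fiberwise isotopy inside a collar of $\partial\S_{g,1}$ identifies this, as a $\Sigma_{g,2}$-bundle with parametrized boundary, with the result of glueing a trivial pair-of-pants bundle to $\S_{g,1}$ along $\partial\S_{g,1}$ — i.e.\ with the $\beta$-stabilization. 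Hence $\psi\circ s_0^v\circ s_0\simeq B\beta$, as required.

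The main obstacle is the bookkeeping in the last paragraph: the explicit geometric identification between ``excise a small boundary-adjacent disc'' and ``glue on a pair of pants'', as $\Sigma_{g,2}$-bundles with oriented, parametrized boundary. This is elementary, being localised in a collar, but must be carried out respecting the basepoint and orientation conventions on fiber boundaries set up in the preliminaries; the rest of the argument is either formal (the five-lemma step) or immediate from the blow-up construction.
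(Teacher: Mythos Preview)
Your proposal is sound, but note that the paper does not actually prove this theorem: it is stated as a classical result and attributed to \cite{FarbMargalit}, propositions 3.19 and 4.6, with no argument given. So there is no ``paper's own proof'' to compare against; you have supplied a proof where the paper only gives a reference.

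That said, your outline is the standard one and matches what the cited reference does. The real oriented blow-up of the tautological section produces a $\Sigma_{g,2}$-bundle with parametrised boundary, the five-lemma step via the framed Birman exact sequence (identifying $\ker\gamma$ with $\pi_1(T^1\surf)$) is exactly how one shows the classifying map is a weak equivalence, and your verification of the $\gamma$- and $\beta$-compatibilities is correct. The only caution is the one you already flag: when matching ``excise a boundary-adjacent disc'' with ``glue a pair of pants'', you must check not merely that the resulting surfaces are abstractly diffeomorphic to $\Sigma_{g,2}$, but that the two $\Gamma_{g,1}$-actions (extend by the identity over the complement) are intertwined by a diffeomorphism fixing a collar of the \emph{correct} boundary component with the \emph{correct} parametrisation. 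This is routine, but it is where a careless treatment would silently swap the two boundary components or reverse an orientation.
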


\subsection{Hyperelliptic mapping class groups}
\label{subsec:hyp}
Fix a diffeomorphism $J$ of $\Sigma_{g,1}$ with the following properites:
\begin{itemize}
 \item $J^2$ is the identity of $\Sigma_{g,1}$;
 \item $J$ acts on $\partial\Sigma_{g,1}\cong\Sone$ as the rotation by an angle $\pi$;
 \item $J$ has exactly $2g+1$ fixed points in the interior of $\Sigma$.
\end{itemize}
The quotient $\Sigma_{g,1}/J$ is a disc and the
map $\Sigma_{g,1}\to\Sigma_{g,1}/J$ is a $2$-fold branched covering map with $2g+1$ branching points.
We say that $J$ is a \emph{hyperelliptic involution} of $\Sigma_{g,1}$.

Consider the group $\Diff_{g,1}^{ext}$ of diffeomorphisms $f\colon\Sigma_{g,1}\to\Sigma_{g,1}$
that preserve the orientation and restrict on a neighborhood of $\partial\Sigma_{g,1}$ either
to the identity, or to $J$. We have a short exact sequence of topological groups
\[
 \begin{tikzcd}
  1\ar[r] & \Diff_{g,1} \ar[r] & \Diff_{g,1}^{ext}\ar[r] & \mathbb{Z}_2 \ar[r] &1.
 \end{tikzcd}
\]
There is a section $\mathbb{Z}_2\to\Diff_{g,1}^{ext}$ given by $J$. Taking connected
components we obtain a split short exact sequence
\[
 \begin{tikzcd}
  1\ar[r] & \Gamma_{g,1} \ar[r] & \Gamma_{g,1}^{ext}\ar[r] & \mathbb{Z}_2 \ar[r] &1,
 \end{tikzcd}
\]
where $\Gamma_{g,1}^{ext}=\pi_0\pa{\Diff_{g,1}^{ext}}$ is called the \emph{extended mapping class group}.

The \emph{extended hyperelliptic mapping class group} $\triangle_{g,1}^{ext}$ is the centralizer in $\Gamma_{g,1}^{ext}$
of the mapping class of $J$:
\[
 \triangle_{g,1}^{ext}=Z([J]).
\]
The \emph{hyperelliptic mapping class group} $\triangle_{g,1}$ is the intersection in $\Gamma_{g,1}^{ext}$
between $\Gamma_{g,1}$ and $\triangle_{g,1}^{ext}$. We have an isomorphism
\[
 \triangle^{ext}_{g,1}\simeq \triangle_{g,1}\times\left<J\right>.
\]

\section{Definition of the map $\phi$ and a general construction}
\label{sec:defphi}
We consider on $\Sigma_{g,1}$ a chain of $2g$ simple closed curves $c_1,\dots,c_{2g}$, such that
$c_i\cap c_j=\emptyset$ for $\abs{i-j}\geq 2$, whereas $c_i$ and $c_j$ intersect transversely in
one point if $\abs{i-j}=1$. Note that
a tubular neighborhood of the union of these curves is itself diffeomorphic to $\surf$.
See figure \ref{fig:birman}.

\begin{figure}
 \centering
 \includegraphics[scale=0.6]{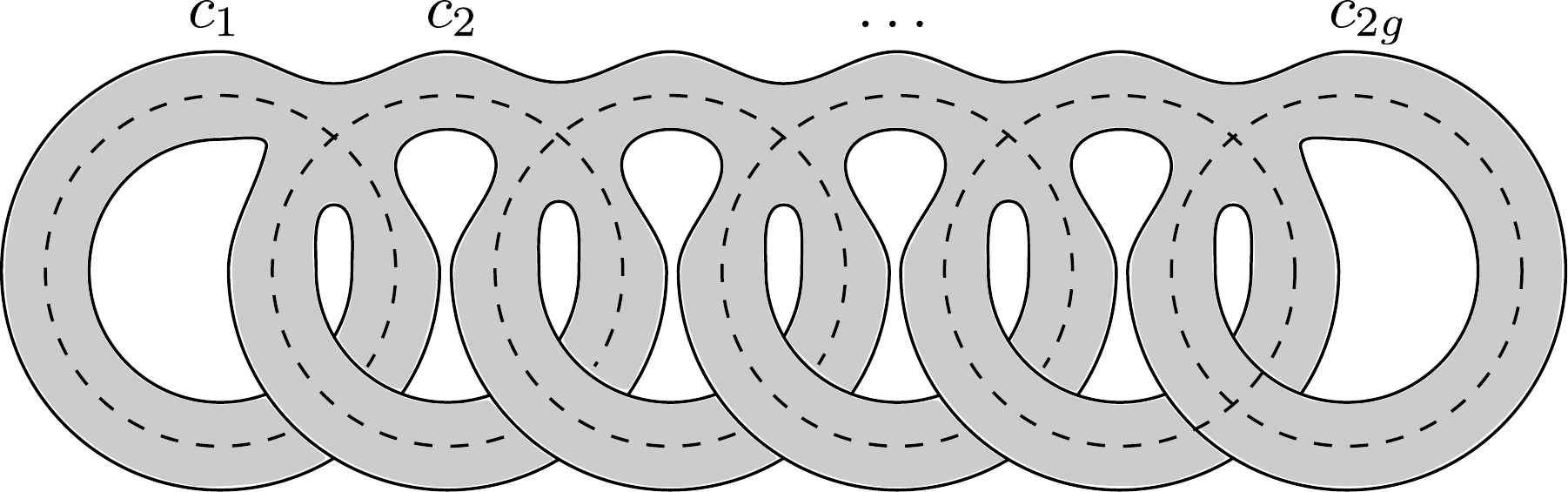}
 \caption{A chain of $2g$ simple closed curves on $\Sigma_{g,1}$.}
 \label{fig:birman}
\end{figure}

Denote by $D_i\in\Gamma_{g,1}$ the Dehn twist about the curve $c_i$; then it is a classical
result (see \cite{FarbMargalit}, Fact 3.9 and Proposition 3.11) that $D_iD_j=D_jD_i$
in $\Gamma_{g,1}$ for $\abs{i-j}\geq 2$, and $D_iD_jD_i=D_jD_iD_j$ for $\abs{i-j}=1$.

Therefore there is an induced morphism of groups
\[
 \phi\colon\Br_{2g+1}\to\Gamma_{g,1}
\]
which is defined by mapping the generator $\sigma_i\in\Br_{2g+1}$ to the Dehn twist $D_i\in\Gamma_{g,1}$. This map is
called the Birman-Hilden inclusion: it is indeed injective and its image is the hyperelliptic
mapping class group (see \cite{BH1,BH2}).

From now on let $n\colon=2g+1$, in particular $n$ is odd.
We give now a nice, geometric description of the $\surf$-bundle $\phi^*\S_{g,1}$ over $C_n(\D)\simeq B\Br_n$.

Consider, in the complex manifold $C_n(\D)\times\overline{\D}\times\C$,
the subspace
\[
 \V=\set{\pa{\set{z_1,\dots,z_n},x,y} \;|\; y^2=\prod_{i=1}^n(x-z_i)}.
\]
Here $\overline\D$ is the closed unit disc in $\C$.
First we show that $\V$ is a smooth manifold with boundary: indeed it is the zero locus on $C_n(\D)\times\overline\D\times\C$ of the
function $f(\set{z_i},x,y)=y^2-\prod_i(x-z_i)$, whose partial derivatives with respect to $x$ and $y$ are
\[
\frac {df}{dx}(\set{z_1,\dots,z_n},x,y)=-\sum_{i=1}^n\prod_{j\neq i}(x-z_j) 
\]
\[
 \frac {df}{dy}(\set{z_1,\dots,z_n},x,y)=2y
\]
If $\frac {df}{dy}$ vanishes, then $y=0$; if moreover $f$ vanishes, then $x=z_i$ for exactly one value
of $i$. Then all the summands but exactly one in the sum for $\frac {df}{dx}$ vanish, and therefore $\frac {df}{dx}\neq 0$.

We have thus shown that $\V$ is a smooth manifold, as $df$ never vanishes
on $\V$. Moreover at least one of the $x$ and the $y$ component of $df$ does not vanish on $\V$, therefore
the natural projection
\[
\pi\colon\V\to C_n(\D)
\]
is a submersion; in particular its fibers are smooth.
Note also that $\V$ is transverse to $C_n(\D)\times\partial\overline\D\times\C$:
if $|x|=1$ then $x\neq z_i$ for all $i$ and we can rewrite
\[
\frac {df}{dx}(\set{z_1,\dots,z_n},x,y)=-\pa{\prod_{i=1}^n(x-z_i)}\pa{\sum_{i=1}^n\frac{1}{x-z_i}}\neq 0,
\]
where the sum is non-zero because it has a non-trivial component in the direction of $\frac 1x$: if we consider
summands as vectors in $\R^2$ with the usual scalar product, then each summand has a positive scalar product
with the vector $\frac 1x$.

The fibers of $\pi$ are smooth manifolds of complex dimension 1, i.e. Riemann surfaces.
By projecting to $x$, each fiber is a double covering of $\overline{\D}$, branched over $n$ points: the covering map is
given by the projection on the $x$ coordinate. Thus the Euler characteristic of the fiber is
$2\cdot\chi(\overline{\D})-n=1-2g$. The boundary of the fiber over any $q=\set{z_1,\dots,z_n}\in C_n(\D)$
is $\set{(x,y)\;|\;\abs x=1, y^2=\prod_{i=1}^n(x-z_i)}$ and is a \emph{connected} double covering of $\Sone$:
a section of this covering would be a continuous choice, for $x\in\Sone$, of a square root $y=\sqrt{\prod_{i=1}^n(x-z_i)}$,
which does not exist since $n$ is odd.

Therefore the fiber of $\pi$ is diffeomorphic to $\surf$.

We want to parametrise the boundary component of each fiber. For any $q=\set{z_1,\dots,z_n}$
we can consider the equation $y^2=\prod_{i=1}^g(1-tz_i)$, for $t$ ranging in $[0,1]$. If $t=1$
the two solutions for $y$ give rise to two points $p_1,p_2\in\partial \pi^{-1}(q)$, putting $x=1$;
if $t=0$ the two values of $y$ are $\pm 1$. As $\prod_{i=1}^g(1-tz_i)\neq 0$ for all $t$, the two
values of $y$ are always different and change continuously while $t$ ranges from $0$ to $1$. This
gives a bijection of the sets $\set{p_1,p_2}$ and $\set{\pm 1}$. Assume that $p_1$ corresponds to $+1$; then
we parametrise $\partial\pi^{-1}(q)$ with the unique continuous choice of a square root $\sqrt{x}$ taking the
value $+1$ on $p_1$. This construction is continuous in $q\in C_n(\D)$.

We have therefore constructed a $\surf$-bundle over $C_n(\D)$, and this yields a classifying map
$C_n(\D)\to B\Gamma_{g,1}$ which in turn gives a map $\Br_n\to\Gamma_{g,1}$ between fundamental groups:
the induced map is precisely $\phi$ (see also \cite{SegalTillmann}).

The last construction admits a slight generalisation, that we briefly discuss here.

Let $B$ be a topological space and let $\psi\colon B\to \C [x,y]$ be a continuous function
from $B$ to the space of polynomials in two variables. Assume the following:
\begin{itemize}
 \item there exist two relatively prime positive integers $m,n$ such that
 for every $b\in B$ the polynomial $\psi(b)(x,y)$ has the form $\pm x^n\pm y^m +$lower order terms, all of
 order $<n$ in $x$ and $<m$ in $y$;
 \item for every $b\in B$ there is no point $(x,y)\in\C^2$ where all the following functions vanish: $\psi(b)(x,y)$,
 $\frac d{dx}\psi(b)(x,y)$
 and $\frac d{dy}\psi(b)(x,y)$;
 \item for every $b\in B$ there is no point $(x,y)\in\C^2$ with $\abs{x}\geq 1$ where both $\psi(b)(x,y)$ and
 $\frac d{dx}\psi(b)(x,y)$ vanish.
\end{itemize}
Then we can consider in $B\times \overline\D\times\C$ the zero locus of $\psi$, that is, the set
\[
 \mathcal{V}=\set{(b,x,y)\,|\,\psi(b)(x,y)=0}.
\]
There is a natural projection of $\mathcal{V}$ onto $B$, and each fiber is a smooth surface of genus $\frac{(m-1)(n-1)}2$
with one boundary component. This boundary component is a $m$-fold covering of $\Sone$ by projecting on $x$,
and it can be parametrised with the parameter $\sqrt[d]{x}$ starting from a point $(1,y_0)$ obtained again
by shrinking to zero all lower order terms of the polynomial (and considering $1$ as the preferred $m$-th root
of the unity).

The construction of $\V$ is a special case of this construction, in which $B=C_n(\D)$ and $\psi(\set{z_1,\dots,z_n})=y^2-\prod_{i=1}^n(x-z_i)$.

Another example is given, again for $B=C_n(\D)$ and for a fixed $d\geq 3$, by the assigment $\psi(\set{z_1,\dots,z_n})=y^d-\prod_{i=1}^n(x-z_i)$:
one gets the universal family of \emph{superelliptic curves} of degree $d$.

A superelliptic curve of degree $d$ is a $d$-fold covering of $\overline{\D}$
branched over $n$ points; its group of deck transformations is cyclic of order $d$ (in particular it acts transitively on fibers); the fiber over
each branching point consists of only one point, and all branching points have the same total holonomy, computed with respect to any regular point.
This family was studied by Callegaro and Salvetti in \cite{CallegaroSalvetti2}.

\section{Unit vertical vector fields}
Our next aim is to construct on the $\surf$-bundle $\V\to C_n(\D)$ a unit vertical vector field, i.e.
a section of the $\Sone$-bundle $\phi^*\UV_{g,1}\to\V=\phi^*\S_{g,1}$. To do so consider on the entire manifold
$C_n(\D)\times\overline{\D}\times\C$ the holomorphic vector field
\[
 \vec v(\set{z_1,\dots,z_n},x,y)=\frac {df}{dy}\cdot\frac{\partial}{\partial x} - \frac {df}{dx}\cdot\frac{\partial}{\partial y}=
 2y\cdot \frac{\partial}{\partial x}-\pa{\sum_{i=1}^n\prod_{j\neq i}(x-z_j)}\cdot\frac{\partial}{\partial y}.
\]
Then $\vec v$ does not vanish on $\V$: we have already seen that on each point of $\V$ at least one of the $x$- and $y$-partial derivatives
of $y^2-\prod_{i=1}^n(x-z_i)$ does not vanish. Moreover $\vec v$ is tangent to $\V$, since it
annihilates $df$; and $\vec v$ is vertical, as it is a linear
combination of $\frac{\partial}{\partial x}$ and $\frac{\partial}{\partial y}$.

Therefore, up to the canonical identification
between the holomorphic tangent bundle and the real tangent bundle and up to renormalisation, we have found
a unit vertical vector field on $\V$, i.e. a section of the $\Sone$-bundle $\phi^*\UV_{g,1}\to\phi^*\S_{g,1}$.
We already have a unit vertical vector field $\phi^*s_0^v$ on the subspace $C_n(\D)=\phi^*s_0(C_n(\D))\subset \phi^*\S_{g,1}$,
and the ratio between them (in the sense of ratio between sections of a principar $\Sone$-bundle)
is given by a map $\theta\colon C_n(\D)\to \Sone$; if we substitute $\vec v$ with
$\theta\cdot \vec v$, then our global vertical vector field extends the canonical one over $\phi^*s_0(C_n(\D))$.

The same construction works in the generalised framework introduced at the end of section \ref{sec:defphi}:
this time $\vec v$ is given on each fiber by the formula
\[
 \vec v(b,x,y)=\frac {d\psi(b)}{dy}(x,y)\cdot \frac{\partial}{\partial x} - \frac {d\psi(b)}{dx}(x,y)\cdot \frac{\partial}{\partial y}
\]
and again we can modify it so as to agree with the canonical vector field over the section at the boundary.

We sketch now an alternative proof of the existence of a unit vertical vector field on $\V\to C_n(\D)$
extending the canonical one over the section at the boundary. Let $\vec{\mathfrak{v}}$ be a vector field on $\Sigma_{g,1}$ as in
figure \ref{fig:field}: it is orthogonal to the curve $c_1$, parallel to $c_2$, again orthogonal to $c_3$ and so on;
moreover if $*\in\Sigma_{g,1}$ denotes the basepoint, then $\vec{\mathfrak{v}}(*)$ is exactly the unit tangent vector
at $*$ that is orthogonal to $\partial\Sigma_{g,1}$ and points outwards.

Let $\mathbb{V}$ be the space of all vector fields $\vec w$ on $\Sigma_{g,1}$ that satisfy $\vec w(*)=\vec{\mathfrak{v}}(*)$
and that have no zeroes on $\Sigma_{g,1}$
(we say briefly that they are \emph{non-vanishing}). Then
$\mathbb{V}\simeq Map_*(\Sigma_{g,1};\Sone)$ is a disjoint union of infinitely many contractible components.
The group $\Diff_{g,1}$ acts on $\mathbb{V}$ through
differentials of diffeomorphisms: this action is well-defined thanks to the hypothesis that differomorphisms in $\Diff_{g,1}$ restrict to the identity
on a neighborhood of the boundary, so that in particular their differential fixes the vector $\vec{\mathfrak{v}}(*)$.

There is an induced action of the mapping class group $\Gamma_{g,1}$ on $\pi_0(\mathbb{V})$, and the \emph{framed
mapping class group} associated to $\vec{\mathfrak{v}}$ is by definition the stabiliser of $[\vec{\mathfrak{v}}]\in \pi_0(\mathbb{V})$, which is a subgroup
$\Gamma_{g,1}^{fr}(\vec{\mathfrak{v}})\hookrightarrow\Gamma_{g,1}$; we call $\mathfrak{i}$ this inclusion of groups.

Consider $\mathfrak{i}^*\S_{g,1}\to B\Gamma_{g,1}^{fr}(\vec{\mathfrak{v}})$, the pull-back of the universal
surface bundle along the map $\mathfrak{i}\colon B\Gamma_{g,1}^{fr}(\vec{\mathfrak{v}})\to B\Gamma_{g,1}$.
Using that connected components of $\mathbb{V}$ are contractible one can construct a unit vertical vector field $\mathfrak{v}$
on $\mathfrak{i}^*\S_{g,1}$ that restricts to $\mathfrak{i}^*s_0^v$ on the section at the boundary $\mathfrak{i}^*s_0(B\Gamma_{g,1}^{fr}(\vec{\mathfrak{v}}))$.

The key remark is now that the image of the Birman-Hilden inclusion $\phi\colon\Br_{2g+1}\to\Gamma_{g,1}$ lies inside $\Gamma_{g,1}^{fr}(v_0)$: indeed
the vector field $\vec{\mathfrak{v}}$ is preserved by the differential of all Dehn twists about the curves $c_i$, up to isotopy through vector fields in $\mathbb{V}$.

Therefore the map $\phi\colon C_n(\D)\to B\Gamma_{g,1}$ factors through $B\Gamma_{g,1}^{fr}(\vec{\mathfrak{v}})$, and we can now pullback
the unit vertical vector field $\mathfrak{v}$ over $\mathfrak{i}^*\S_{g,1}$ to a unit vertical vector field $\vec v$
over $\phi^*\S_{g,1}$with all the desired properties.

\begin{figure}
 \centering
 \includegraphics[scale=0.65]{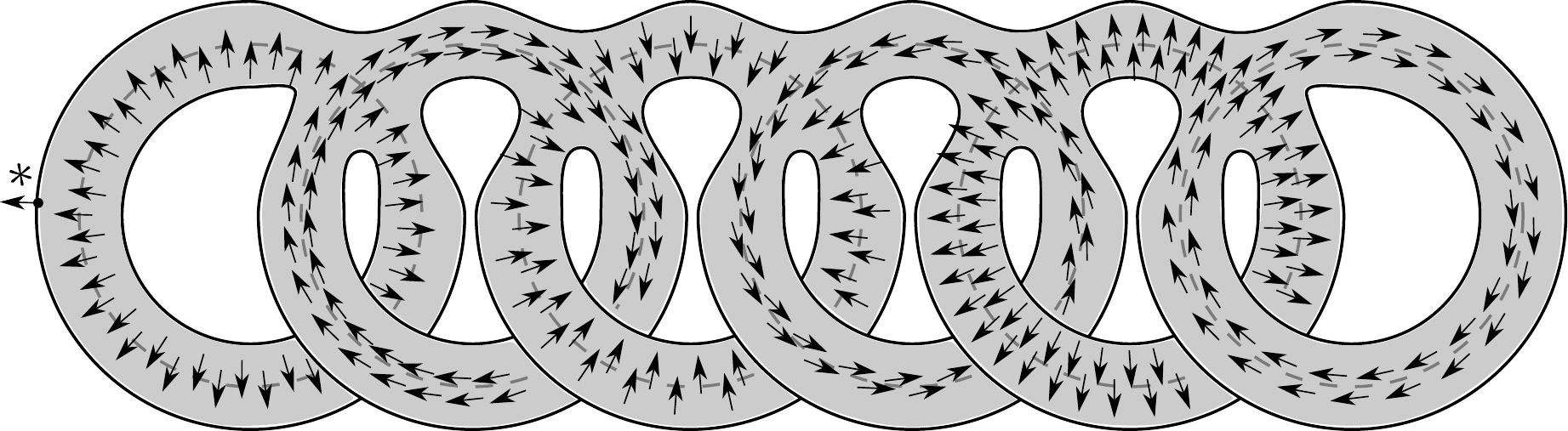}
 \caption{The vector field $\vec{\mathfrak{v}}$.}
 \label{fig:field}
\end{figure}

\section{Stable vanishing of $\phi_*$}
Our proof of theorem \ref{thm:STtwisted} consists of two steps. In the first step we formulate
the problem in an alternative way, namely we replace the map
\[
 \phi_*\colon H_k(\Br_{2g+1};\phi^*\H)\to H_k(\Gamma_{g,1};\H)
\]
with the map
\[
 \phi_*\colon H_{k+1}(\phi^*\S_{g,1},B\Br_{2g+1})\to H_{k+1}(\S_{g,1},B\Gamma_{g,1}).
\]
Recall that $B\Br_{2g+1}$ can be seen as a subspace of $\phi^*\S_{g,1}$ through the section at the boundary.

The second map deals only with homology with constant coefficients, although we have now more complicated
spaces.

In the second step we factor the above map through the homology groups
\[
 H_{k+1}(\UV_{g,1},B\Gamma_{g,1})
\]
which are trivial in the stable range. Thus the map $\phi_*$ is trivial in homology in the stable range.
The strategy of the proof is summarized in the following diagram
\[
 \begin{tikzcd}[column sep=2em,row sep=3em]
 H_k(\Br_{2g+1};\phi^*\H)\ar[rr,"\phi_*"]\ar[d,dashed,"\cong"] & & H_k(\Gamma_{g,1};\H)\ar[d,dashed,"\cong"]\\
 H_{k+1}(\phi^*\S_{g,1},B\Br_{2g+1})\ar[rr,"\phi_*"]\ar[dr,dashed] & & H_{k+1}(\S_{g,1},B\Gamma_{g,1})\\
  &  H_{k+1}(\UV_{g,1},B\Gamma_{g,1})=0.\ar[ur,"p^v_*"] &
 \end{tikzcd}
\]

\subsection{The reformulation of the problem.}
The bundle $\S_{g,1}\to B\Gamma_{g,1}$, together with the global section $s_0$, can be seen as a pair of bundles
$(\S_{g,1},B\Gamma_{g,1})\to B\Gamma_{g.1}$ with fiber the pair $(\surf,*)$. There is an associated
Serre spectral sequence whose second page contains the homology groups
\[
 E^2_{p,q}=H_p(B\Gamma_{g,1};H_q(\surf,*))
\]
and whose limit is the homology of the pair $(\S_{g,1},B\Gamma_{g,1})$. Note that the homology
group $H_q(\surf,*)$ is non-trivial only for $q=1$, in which case it is exactly the symplectic representation
$\H$ of $\Gamma_{g,1}$. So the second page of the spectral sequence has only one non-vanishing row and therefore coincides with
its limit, i.e.
\[
 H_{p+1}(\S_{g,1},B\Gamma_{g,1})=H_p(B\Gamma_{g,1};\H).
\]
The whole construction is natural with respect to pullbacks. Let again $n=2g+1$. The natural map
$\phi\colon(\phi^*\S_{g,1},B\Br_n)\to (\S_{g,1},B\Gamma_{g,1})$ is a map of pairs of bundles,
i.e. it covers the map $\phi\colon B\Br_n\to B\Gamma_{g,1}$. The fiber of the pair of bundles
$(\phi^*\S_{g,1},B\Br_n)\to B\Br_n$ is still the pair $(\surf, *)$, so its homology is concentrated
in degree one and the corresponding spectral sequence gives again an isomorphism
\[
  H_{p+1}(\phi^*\S_{g,1},B\Br_n)=H_p(B\Br_n;\phi^*\H).
\]
The induced map between the second pages of the spectral sequences is the map
\[
\phi_*\colon H_k(\Br_n;\phi^*\H)\to H_k(\Gamma_{g,1};\H),
\]
appearing in theorem \ref{thm:STtwisted}; the induced map on the limit is the map
\[
 \phi_*\colon H_{k+1}(\phi^*\S_{g,1},B\Br_n)\to H_{k+1}(\S_{g,1},B\Gamma_{g,1}).
\]
Hence we can study the latter map, thus reducing ourselves to understand the behaviour
of the map of pairs $\phi\colon (\phi^*\S_{g,1},B\Br_n)\to (\S_{g,1},B\Gamma_{g,1})$
in homology with constant coefficients.

\subsection{The factorisation through $H_{k+1}(\UV_{g,1},B\Gamma_{g,1})$.}
Recall that there is a unit vertical vector field $\vec v$ on $\V=\phi^*\S_{g,1}$
extending the canonical vector field $\phi^*s_0^vS$ on the subspace $B\Br_n\subset \phi^*\S_{g,1}$.

This means that in the following diagram
\[
\begin{tikzcd}[column sep=6em,row sep=3em]
  \pa{\phi^*\UV,B\Br_n} \ar[r,"\phi"]\ar[d,"\phi^*p^v"] & \pa{\UV, B\Gamma_{g,1}} \ar[d,"p_v"]\\
  \pa{\phi^*\S_{g,1},B\Br_n} \ar[r,"\phi"] \ar[ur,dashed]  & \pa{\S_{g,1},B\Gamma_{g,1}}
 \end{tikzcd}
\]
there is a dashed diagonal arrow lifting the bottom horizontal map, so that the lower right triangle commutes. In particular the map 
\[
 \phi_*\colon H_{k+1}(\phi^*\S_{g,1},B\Br_n)\to H_{k+1}(\S_{g,1},B\Gamma_{g,1})
 \]
factors through the homology group $H_{k+1}\pa{\UV, B\Gamma_{g,1}}$. Since by theorem \ref{thm:Harer}
the inclusion $s_0^v\circ s_0\colon B\Gamma_{g,1}\to \UV_{g,1}$ is a homology-isomorphism in degree $\leq \frac 23 g$,
we deduce that $H_{k+1}\pa{\UV, B\Gamma_{g,1}}=0$ for $k+1\leq \frac 23 g$, and therefore for $k\leq \frac 23 g -1$
the map
\[
 \phi_*\colon H_{k+1}(\phi^*\S_{g,1},B\Br_n)\to H_{k+1}(\S_{g,1},B\Gamma_{g,1})
\]
is the zero map. This completes the proof of theorem \ref{thm:STtwisted}.

The result can be generalised
to the case in which we construct a $\surf$-bundle over a space $B$ through a map $\psi\colon B\to\C[x,y]$ as in
the previous section. We obtain a map $\Psi\colon B\to B\Gamma_{g,1}$ that induces the trivial map
\[
 \Psi_*\colon H_k(B;\Psi^*\H)\to H_k(B\Gamma_{g,1};\H)
\]
in homology in degree $k\leq \frac 23 g-1$. The proof is the same.

\section{Torsion property of $H_*(\Br_n;\phi^*\H)$.}
In this section we prove theorem \ref{thm:fourtorsion}. Using the isomorphism
\[
 H_k(\Br_n;\phi^*\H)\simeq H_{k+1}(\V,C_n(\D))
\]
we want to prove that the second group is $4$-torsion.

On the complex manifold $\V$ we consider the holomorphic function $y$. We call $\Z\subset\V$ the
zero locus of $y$: $\Z$ is a smooth complex
submanifold, indeed the vector field already considered
\[
\vec v=2y\cdot \frac{\partial}{\partial x}-\pa{\sum_{i=1}^n\prod_{j\neq i}(x-z_j)}\cdot\frac{\partial}{\partial y}
\]
is non-zero on the whole $\V$, and thererfore on $\Z$ its $y$-component must be non-zero; this witnesses
the non-vanishing of $dy|_{\V}$ on $\Z$. As $\Z$ is the smooth zero-locus of
a holomorphic function on the complex manifold $\V$, the normal bundle of $\Z$ in $\V$ must be trivial.

The space $\Z$ is homeomorphic to the space
\[
 C_{n-1,1}(\D)=\set{(\set{z_1,\dots, z_{n-1}},x)\in C_{n-1}(\D)\times \D\;|\; x\neq z_i\;\forall 1\leq i\leq n-1},
\]
that we call the \emph{configuration space} of $n-1$ \emph{black} and one \emph{white} points in the disc.

Indeed if $y=0$, then the equation $y^2=\prod_{i=1}^n(x-z_i)$ defining $\V$ tells us that $x$ must coincide
with one, and exactly one, of the numbers $z_i$; hence a point of $\Z$ is exactly an unordered configuration of $n$ points in $\D$,
one of which is special (and we say, it is \emph{white}) because it coincides with $x$.

We call $\T$ the (open) complement of $\Z$ in $\V$.

We call $N(\Z)$ a small, closed tubular neighborhood of
$\Z$ in $\V$. Since the normal bundle of $\Z$ in $\V$ is trivial, we have $N(\Z)\cong \Z\times\overline{\D}$,
and $N(\Z)\cap\T\simeq \partial N(\Z)\cong\Z\times\Sone$. By construction the copy of $C_n$ contained in $\V$, i.e. the
image of the section $\phi^*s_0$, is contained in $\T\setminus N(\Z)$.

We have a Mayer-Vietoris sequence
\[
 \dots \rightarrow H_k\pa{ N(\Z)\cap\T} \rightarrow H_k\pa{\Z}\oplus H_k\pa{\T,C_n(\D)} \rightarrow H_k\pa{\V,C_n(\D)} \rightarrow \dots
\]
from which we derive the following lemma.
\begin{lem}
 \label{lemma:MVsuTZ}
There is a long exact sequence
 \[
 \begin{array}{c}
 \dots \rightarrow H_k\pa{C_{n-1,1}(\D)}\oplus H_{k-1}\pa{C_{n-1,1}(\D)}\otimes H_1(\Sone) \overset{\iota}{\rightarrow}\\
  \overset{\iota}{\rightarrow} H_k\pa{C_{n-1,1}(\D)}\oplus H_k\pa{\T,C_n(\D)} \rightarrow H_k\pa{\V,C_n(\D)} \rightarrow \dots
  \end{array}
\] 
\end{lem}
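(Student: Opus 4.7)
The plan is to take the Mayer--Vietoris sequence displayed immediately before the lemma at face value and simply identify each of its terms via the homotopy equivalences and homeomorphisms established in the text, then apply the Künneth formula to split the intersection term.

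First I would replace $H_k(\Z)$ by $H_k(C_{n-1,1}(\D))$, which is justified by the explicit homeomorphism $\Z\cong C_{n-1,1}(\D)$ described just above the lemma (a point of $\Z$ is an unordered configuration $\{z_1,\dots,z_n\}$ one of whose elements is marked as white, namely the one equal to $x$).

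Next I would analyse the intersection term $N(\Z)\cap\T$. Since the normal bundle of $\Z$ in $\V$ is trivial, the tubular neighbourhood is $N(\Z)\cong\Z\times\overline{\D}$, and removing the zero section gives a deformation retraction
\[
N(\Z)\cap\T \;\simeq\; \partial N(\Z) \;\cong\; \Z\times\Sone \;\cong\; C_{n-1,1}(\D)\times\Sone.
\]
Because $H_*(\Sone)$ is free abelian (of rank one in degrees $0$ and $1$), the Künneth formula has no $\mathrm{Tor}$ term and yields
\[
H_k\bigl(N(\Z)\cap\T\bigr) \;\cong\; H_k\bigl(C_{n-1,1}(\D)\bigr) \;\oplus\; \bigl(H_{k-1}(C_{n-1,1}(\D))\otimes H_1(\Sone)\bigr).
\]

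Finally, substituting these two identifications into the Mayer--Vietoris sequence recalled just above the statement produces exactly the displayed long exact sequence, with the map $\iota$ being the image of the Mayer--Vietoris connecting map under the identifications above. There is no serious obstacle here; the only point that deserves a line of comment is that the section at the boundary places $C_n(\D)$ inside $\T\setminus N(\Z)$, so $C_n(\D)\cap N(\Z)=\emptyset$ and the relative Mayer--Vietoris sequence for the pair $(\V,C_n(\D))$ genuinely has the form written down (with an absolute term $H_k(\Z)$ rather than a relative one).
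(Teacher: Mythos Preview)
Your proposal is correct and follows exactly the approach of the paper, which simply writes down the relative Mayer--Vietoris sequence for the covering $\V=N(\Z)\cup\T$ of the pair $(\V,C_n(\D))$ and states that the lemma is derived from it. You have supplied precisely the identifications the paper leaves implicit: $N(\Z)\simeq\Z\cong C_{n-1,1}(\D)$, $N(\Z)\cap\T\simeq\Z\times\Sone$ together with K\"unneth, and the observation that $C_n(\D)\subset\T\setminus N(\Z)$ so that the $N(\Z)$-term is absolute rather than relative.
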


Our goal is to get information about the homology of $\V$ by knowing the other homologies and
the behaviour of the maps in the previous sequence.
In particular we need some results about the space $\T$.

 There is a double convering map $\Sq\colon\T\to C_{n,1}(\overline{\D})$,
where
\[
 C_{n,1}(\overline{\D})=\set{(\set{z_1,\dots, z_n},x)\in C_n(\D)\times \overline{\D}\;|\; x\neq z_i\;\forall 1\leq i\leq n}.
\]
The map $\Sq$ is given by forgetting the value of $y$ and interpreting $x$ as the \emph{white}, distinguished point. We have introduced
$C_{n,1}(\overline{\D})$ because in $\T$ it may happen that $x\in\Sone$, whereas the numbers $z_i$ are always
in the interior of the unit disc; nevertheless the inclusion
$C_{n,1}(\D)\subset C_{n,1}(\overline{\D})$ is a homotopy equivalence.

The 2-fold covering $\Sq\colon \T\rightarrow C_{n,1}(\overline\D)$ has a nontrivial deck transformation
$\varepsilon\colon \T\rightarrow \T$, which corresponds to changing the sign of $y$.

\begin{lem}
 \label{lemma:varepsilon=id}
 The map $\varepsilon$ is homotopic to the identity of $\T$.
\end{lem}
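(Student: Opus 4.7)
The plan is to construct an explicit homotopy from $\mathrm{id}_\T$ to $\varepsilon$ coming from a natural $\Sone$-action on the ambient manifold $\V$. For each $\lambda\in\Sone\subset\C$, I would set
\[
 \lambda\cdot\pa{\set{z_1,\dots,z_n},x,y}\colon=\pa{\set{\lambda^2 z_1,\dots,\lambda^2 z_n},\lambda^2 x,\lambda^n y}.
\]
First I would verify that this formula defines an action of $\Sone$ on $\V$: the equation $y^2=\prod_{i=1}^n(x-z_i)$ is preserved since
\[
 (\lambda^n y)^2=\lambda^{2n}\prod_i(x-z_i)=\prod_i(\lambda^2 x-\lambda^2 z_i),
\]
the conditions $\set{z_i}\in C_n(\D)$ and $x\in\overline\D$ are preserved because $\abs{\lambda^2}=1$, and the formula is manifestly multiplicative in $\lambda$. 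The action restricts to $\T=\V\setminus\Z$, because $\lambda^n y$ vanishes if and only if $y$ does.

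The decisive observation is that $n=2g+1$ is odd, so the element $\lambda=-1$ acts as
\[
 (-1)\cdot\pa{\set{z_i},x,y}=\pa{\set{z_i},x,(-1)^n y}=\varepsilon\pa{\set{z_i},x,y}.
\]
Choosing the path $\lambda(t)=e^{i\pi t}$ from $1$ to $-1$ in $\Sone$ therefore produces a homotopy
\[
 H\colon\T\times[0,1]\to\T,\qquad H\pa{\pa{\set{z_i},x,y},t}=\pa{\set{e^{2\pi it}z_i},e^{2\pi it}x,e^{i\pi nt}y},
\]
with $H(-,0)=\mathrm{id}_\T$ and $H(-,1)=\varepsilon$, as required.

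There is no real obstacle here: once one spots this $\Sone$-action, the verification reduces to a one-line computation, and the odd parity of $n$ is doing all the work, since it is precisely what makes $-1\in\Sone$ act on the $y$-coordinate as multiplication by $-1$. Note that the homotopy is not fibred over $C_n(\D)$---it rotates the base configurations $\set{z_i}$ and the point $x$ as well---but this is irrelevant, since the statement only asks for a free homotopy of self-maps of $\T$.
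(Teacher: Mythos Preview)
Your proof is correct and is essentially the same as the paper's: the paper rotates the base configuration by $e^{2\pi it}$ and lifts this loop to $\T$, tracking that $y$ gets multiplied by $e^{\pi i n t}$, which is exactly your explicit formula $H\bigl((\{z_i\},x,y),t\bigr)=(\{e^{2\pi it}z_i\},e^{2\pi it}x,e^{i\pi nt}y)$. Your packaging of this as an $\Sone$-action (with $\lambda=-1$ acting as $\varepsilon$) is a clean way to say the same thing.
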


\begin{proof}
 First we define a homotopy $H_{\varepsilon}\colon C_{n,1}(\overline\D)\times[0,1]\rightarrow C_{n,1}(\overline\D)$.
 For $p\in C_{n,1}(\overline\D)$ and $t\in [0,1]$ we set $H_{\varepsilon}(p,t)= e^{2\pi it}\cdot p$: that is,
 at time $t$ we rotate the configuration $p$ by an angle $2\pi t$ counterclockwise.
 Thus $H_\varepsilon$ is a homotopy from the identity of $C_{n,1}(\overline\D)$ to, again, the identity
 of $C_{n,1}(\overline\D)$.
 
 We lift this homotopy to a homotopy $\tilde H_{\varepsilon}\colon \T\times [0,1]\rightarrow \T$,
 starting from the identity of $\T$ at time $t=0$. At time $t=1$ any point $p\in \T$ is mapped to
 a point $p'$ lying over the same point of $C_{n,1}(\overline\D)$, i.e., $\Sq(p)=\Sq(p')$.
 
 During the homotopy $\tilde H_{\varepsilon}$ the complex number $y$ associated to $p$ is multiplied
 by $e^{2\pi i nt/2}$ at time $t$, since its square is multiplied by $e^{2\pi int}$. So at
 time $t=1$, the value of $y$ has been multiplied by $e^{2\pi in/2}=-1$, i.e., $p'=\varepsilon(p)$:
 here we have used that $n$ is odd.
\end{proof}

We get the following corollary in homology:

\begin{cor}
 \label{cor:twoproperties}
The map $\Sq_*\colon H_*(\T)\rightarrow H_*\pa{C_{n,1}(\overline\D)}$ has the following properties:
\begin{itemize}
 \item every element in the kernel of $\Sq_*$ has order 2 in $H_*\pa{\T}$;
 \item every element of the form $2c$ with $c\in H_*\pa{C_{n,1}(\overline\D)}$ is in the image of $\Sq_*$.
\end{itemize}
The same two properties hold for the transfer homomorphism $\Sq^!\colon H_n(C_{n,1}(\overline\D))\rightarrow H_n(\T)$.
\end{cor}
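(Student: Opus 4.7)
The plan is to deduce both properties for $\Sq_*$ and $\Sq^!$ from the two standard transfer identities for a free $\mathbb{Z}/2$-covering, applied to the 2-fold covering $\Sq\colon\T\to C_{n,1}(\overline\D)$ with non-trivial deck transformation $\varepsilon$. These identities read
\[
 \Sq_*\circ\Sq^!=2\cdot\mathrm{id}_{H_*(C_{n,1}(\overline\D))},\qquad\Sq^!\circ\Sq_*=\mathrm{id}_{H_*(\T)}+\varepsilon_*.
\]
The first identity expresses the fact that each fiber of $\Sq$ has cardinality two; the second is the classical statement that the transfer followed by the pushforward equals the sum over the deck transformations, a fact that already holds at the level of singular chains for a free action of a finite group.

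The crucial input is Lemma \ref{lemma:varepsilon=id}, which tells us that $\varepsilon_*=\mathrm{id}$ on $H_*(\T)$. Substituting this into the second identity yields
\[
 \Sq^!\circ\Sq_*=2\cdot\mathrm{id}_{H_*(\T)},
\]
symmetric to the first identity. In other words, both compositions $\Sq_*\Sq^!$ and $\Sq^!\Sq_*$ are multiplication by $2$ on their respective homology groups.

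From here the four statements follow formally. If $x\in H_*(\T)$ lies in $\ker\Sq_*$, then $2x=\Sq^!(\Sq_*(x))=0$; and for any $c\in H_*(C_{n,1}(\overline\D))$, the identity $2c=\Sq_*(\Sq^!(c))$ exhibits $2c$ inside the image of $\Sq_*$. The analogous claims for $\Sq^!$ use the same two equalities with the roles swapped: if $c\in\ker\Sq^!$ then $2c=\Sq_*(\Sq^!(c))=0$, while for any $x\in H_*(\T)$ the element $2x=\Sq^!(\Sq_*(x))$ is in the image of $\Sq^!$.

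There is no real obstacle here: once Lemma \ref{lemma:varepsilon=id} is in hand, the corollary is a purely formal consequence of the transfer formalism for a finite covering, with no additional geometric input required. The only thing worth emphasising in the write-up is that the hypothesis $n$ odd, used in the proof of Lemma \ref{lemma:varepsilon=id}, is precisely what makes $\varepsilon_*=\mathrm{id}$ and therefore collapses $\mathrm{id}+\varepsilon_*$ to multiplication by $2$ rather than $0$.
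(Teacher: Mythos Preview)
Your argument is correct and is essentially the paper's own proof: use the transfer identities $\Sq_*\circ\Sq^!=2\cdot\mathrm{id}$ and $\Sq^!\circ\Sq_*=\mathrm{id}+\varepsilon_*$, invoke Lemma~\ref{lemma:varepsilon=id} to reduce the second to multiplication by $2$, and read off the four conclusions. The paper states this more tersely, but the content is identical.
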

\begin{proof}
 We know that $\varepsilon_*$ is the identity map on $H_*\pa{\T}$, by lemma \ref{lemma:varepsilon=id};
 then $\Sq_*\circ\Sq^!$ is multiplication by 2, and $\Sq^!\circ\Sq_*$ is the sum of the identity and $\varepsilon_*$,
 so it is also multiplication by 2. The result follows immediately.
\end{proof}
There is a copy of $C_n(\D)$ embedded in $C_{n,1}(\overline{\D})$, given by selecting $1\in\Sone$ as white point:
this is exactly the image under $\Sq$ of the copy of $C_n(\D)$ embedded in $\T$ along $\phi^*s_0$. We get a diagram
of split short exact sequences
\[
 \begin{tikzcd}[column sep=3em, row sep=3em]
H_k(C_n(\D))\ar[r,"s"] \ar[d, equal] & H_k(\T)\ar[r] \ar[d, "\Sq_*"] & H_k(\T,C_n(\D)) \ar[d, "\Sq_*"] \\
H_k(C_n(\D))\ar[r,"s"] & H_k(C_{n,1}(\overline\D))\ar[r] & H_k(C_{n,1}(\overline\D),C_n(\D)).
 \end{tikzcd}
\]
Splitting is due to the fact that both $\T$ and $C_{n,1}(\overline{D})$ retract onto $C_n(\D)$: the retraction
is given by forgetting all data but the position of the $z_i$'s, so the left square is endowed with retractions
which are also compatible with the vertical maps.
Therefore the properties listed in corollary \ref{cor:twoproperties} hold also for the
map $\Sq_*\colon H_*(\T,C_n)\to H_*(C_{n,1}(\overline\D),C_n(\D))$.

Let $\mu\colon C_{n-1,1}(\D)\times\Sone\to C_{n,1}(\D)$ be the following map:
\[
 \mu\pa{\pa{\set{z_1,\dots,z_{n-1}},x},\theta}=\pa{\set{z_1,\dots,z_{n-1}.x+\delta\theta},x},
\]
where
\[
\delta=\delta(\set{z_1,\dots,z_{n-1}},x)=\frac 12\min\pa{\set{1-\abs{x}}\cup \set{\abs{z_i-x}\,|\,1\leq i\leq n-1}}>0.
\]
In words, $\mu$ transforms a configuration of one white point $x$ and $n-1$ black points $z_1,\dots, z_{n-1}$ into a configuration
with one more black point, by adding a new black point near $x$, in the direction of $\theta$. If we see $\Sone$ as a
homotopy equivalent replacement of $C_{1,1}$, then $\mu$ is up to homotopy a special case of the multiplication
$\mu\colon C_{1,h}\times C_{1,k}\to C_{1,h+k}$ making $\coprod_{k\geq 0} C_{1,k}$ into a $H$-space; we will not need
this general construction, which was first described in \cite{Vershinin}.

We recall also the following result, that can be found in \cite{Vassiliev}
\begin{lem}
\label{lem:Vassiliev}
Let $\nu$ be the composition
\[
  H_{k-1}\pa{C_{n-1,1}(\D)}\otimes H_1(\Sone)\subset H_k(C_{n-1,1}(\D)\times\Sone)
  \overset{\mu_*}{\rightarrow} H_k(C_{n,1}(\D)) \simeq
   H_k(C_{n,1}(\overline{\D}))
\]
Then $\nu$ is an isomorphism of $H_{k-1}\pa{C_{n-1,1}(\D)}\otimes H_1(\Sone)$ with the kernel
of the retraction $H_k(C_{n,1}(\overline{\D}))\to H_k(C_n(\D))$;
this kernel is also isomorphic to the group $H_k(C_{n,1}(\overline{\D}),C_n(\D))$.
\end{lem}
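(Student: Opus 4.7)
The plan is to analyse the forgetful fibration $\pi\colon C_{n,1}(\overline\D)\to C_n(\D)$ that discards the white point $x$. Its fiber over $\set{z_1,\ldots,z_n}$ is $F=\overline\D\setminus\set{z_1,\ldots,z_n}$, a closed disc with $n$ interior punctures, hence homotopy equivalent to a wedge of $n$ circles. Thus $H_0(F)=\mathbb{Z}$, $H_1(F)\cong\mathbb{Z}^n$, and $H_q(F)=0$ for $q\geq 2$, with $\Br_n$ acting on $H_1(F)$ by permuting the small loops around the punctures according to the underlying permutation of the braid---the standard permutation representation on $n$ letters.

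The stabiliser of a basis vector is the subgroup of braids whose underlying permutation fixes the corresponding puncture, which is $\Br_{n-1,1}=\pi_1\pa{C_{n-1,1}(\D)}$. Shapiro's lemma therefore identifies $E_2^{p,1}=H_p(\Br_n;\mathbb{Z}^n)\cong H_p\pa{C_{n-1,1}(\D)}$, while $E_2^{p,0}=H_p\pa{C_n(\D)}$ and $E_2^{p,q}=0$ for $q\geq 2$. Only the differential $d_2\colon E_2^{p,0}\to E_2^{p-2,1}$ could be nontrivial, but the section $s\colon C_n(\D)\to C_{n,1}(\overline\D)$, $\set{z_1,\ldots,z_n}\mapsto(\set{z_1,\ldots,z_n},1)$, makes $\pi_*$ split surjective and hence forces $d_2=0$. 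The Serre spectral sequence therefore collapses at $E_2$, and combined with the splitting produced by $s$ it yields
\[
H_k\pa{C_{n,1}(\overline\D)}\cong H_k\pa{C_n(\D)}\oplus H_{k-1}\pa{C_{n-1,1}(\D)},
\]
whose second summand is simultaneously $\ker(\pi_*)$ and, via the split long exact sequence of the pair, $H_k\pa{C_{n,1}(\overline\D),C_n(\D)}$.

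It remains to identify $\nu$ with the projection onto this second summand. That $\nu$ lands in $\ker(\pi_*)$ follows because $\pi\circ\mu$ is homotopic, via the continuous shrinking $\delta\to 0$, to a composition $C_{n-1,1}(\D)\times\Sone\to C_{n-1,1}(\D)\to C_n(\D)$ in which the first arrow is projection; this annihilates the direct summand $H_{k-1}\pa{C_{n-1,1}(\D)}\otimes H_1(\Sone)$. To match $\nu$ with the Serre filtration isomorphism, one represents $\alpha$ by a cycle $\bar\alpha$ in $C_{n-1,1}(\D)$ and reads off the class of $\mu_*(\bar\alpha\otimes[\Sone])$ in $E_\infty^{k-1,1}=E_2^{k-1,1}$: it is the Shapiro image of $\alpha$, namely the pushforward of $\bar\alpha$ to $C_n(\D)$ labelled by the basis vector of $H_1(F)$ corresponding to the small loop around the newly inserted puncture $x+\delta$. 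The main obstacle is this final cycle-level identification, which can be made rigorous by viewing $\mu$ as a morphism over the map $C_{n-1,1}(\D)\to C_n(\D)$, $(\set{z_i},x)\mapsto\set{z_i,x}$, and invoking naturality of the Serre spectral sequence.
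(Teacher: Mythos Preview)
The paper does not actually prove this lemma: it is stated as a known result and attributed to Vassiliev. Your Serre spectral sequence argument for the Fadell--Neuwirth fibration $\pi\colon C_{n,1}(\overline\D)\to C_n(\D)$ is a correct and standard way to establish it; the identification of $E_2^{p,1}$ via Shapiro, the vanishing of $d_2$ forced by the boundary section, and the resulting splitting are all fine.

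The only step that deserves a little more care is the final identification of $\nu$ with the Shapiro isomorphism on $E^{*,1}_\infty$. As written, $\mu$ is not a strict map of fibrations over $(\set{z_i},x)\mapsto\set{z_i,x}$, only one up to the $\delta\to 0$ homotopy, so naturality of the Serre spectral sequence does not apply directly. A clean fix is to note that $\mu$ is homotopic, via a half-twist interchanging the white point $x$ with the nearby black point $x+\delta\theta$, to
\[
\mu'\bigl((\set{z_1,\dots,z_{n-1}},x),\theta\bigr)=\bigl(\set{z_1,\dots,z_{n-1},x},\,x+\delta\theta\bigr),
\]
which \emph{is} a strict fibration map over $C_{n-1,1}(\D)\to C_n(\D)$. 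On fibres, $\mu'$ sends $\Sone$ to the small loop around the puncture $x$ in $\overline\D\setminus\set{z_1,\dots,z_{n-1},x}$, i.e.\ to the basis vector of $H_1(F)$ stabilised by $\Br_{n-1,1}$; this is precisely the coefficient map realising Shapiro's isomorphism. With this adjustment your argument is complete.
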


The following lemma analyses the behaviour of the map $\iota$ appearing in the Mayer-Vietoris sequence of lemma \ref{lemma:MVsuTZ}.

\begin{lem}
 \label{lemma:behaviourofiota}
Let $\iota$ be the map in the Mayer Vietoris sequence of lemma \ref{lemma:MVsuTZ}. We consider the
restriction of $\iota$ to the two summands of its domain, and its projection to the two summands of its codomain:
\begin{itemize}
 \item $\iota$ induces an isomorphism $H_k(C_{n-1,1}(\D))\rightarrow H_k(C_{n-1,1}(\D))$;
 \item $\iota$ induces the zero map $H_{k-1}(C_{n-1,1}(\D))\otimes H_1(\Sone)\rightarrow H_k(C_{n-1,1}(\D))$;
 \item $\iota$ induces the following map $H_{k-1}(C_{n-1,1}(\D))\otimes H_1(\Sone)\rightarrow H_k(\T,C_n(\D))$
 \[
 H_{k-1}(C_{n-1,1}(\D))\otimes H_1(\Sone) \overset{\nu}{\rightarrow} H_k(C_{n,1}(\overline{\D})
 \overset{\Sq^!}{\rightarrow} H_k(\T) \rightarrow H_k(\T,C_n(\D)).
 \]

\end{itemize}
\end{lem}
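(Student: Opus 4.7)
The plan is to unwind the Mayer--Vietoris map $\iota$ via the explicit identification $N(\Z)\cap\T\simeq\Z\times\Sone$ (from the trivialisation of the normal bundle of $\Z$ in $\V$), and to recognise the hardest component (the third bullet) as a transfer through a pullback square of double covers. Parametrising the $\Sone$-factor by the phase of $y$, the K\"unneth formula gives
\[
H_k(N(\Z)\cap\T)\cong H_k(\Z)\oplus\bigl(H_{k-1}(\Z)\otimes H_1(\Sone)\bigr),
\]
and the component of $\iota$ landing in $H_k(\Z)\cong H_k(C_{n-1,1}(\D))$ is induced by the projection $\Z\times\Sone\to\Z$. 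This immediately yields the first bullet (identity on the $H_k(\Z)$ summand) and the second bullet (zero on the other summand).

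The third bullet rests on the following homotopy-commutative square:
\[
\begin{tikzcd}
\Z\times\Sone\ar[r,hook]\ar[d,"\mathrm{id}\times d_2"'] & \T\ar[d,"\Sq"]\\
\Z\times\Sone\ar[r,"\mu"] & C_{n,1}(\overline{\D}),
\end{tikzcd}
\]
where $d_2\colon\Sone\to\Sone$ is the degree-$2$ self-map. To establish this I would do a local computation near a point of $\Z$: writing $\Z$ as $\set{x=z_n,\,y=0}$, the defining equation $y^2=\prod_i(x-z_i)$ forces $x-z_n\approx y^2/c$ with $c=\prod_{i\neq n}(z_n-z_i)\neq 0$, so that as $y$ traverses a small circle $|y|=\epsilon$, the displacement of the ``extra'' black point $z_n$ from the white point $x$ winds twice. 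Hence $\Sq$ restricted to $N(\Z)\cap\T$ agrees up to homotopy with $\mu\circ(\mathrm{id}\times d_2)$.

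I would then argue that this square is a pullback of double covers: both the upper-left object (as a double cover of $\Z\times\Sone$ via $\mathrm{id}\times d_2$) and the pullback $\mu^*\T$ are classified by the element of $H^1(\Z\times\Sone;\mathbb{Z}_2)$ dual to $[\Sone]$. Indeed the image under $\mu_*$ of the generator of $\pi_1(\Sone)$ fails to lift to $\pi_1(\T)$, because a loop in which one black point travels once around the white point $x$ changes the sign of $y$ (this is where $n$ odd enters, exactly as in Lemma \ref{lemma:varepsilon=id}). By naturality of the transfer under a pullback of double covers, the inclusion-induced map $H_*(\Z\times\Sone)\to H_*(\T)$ equals $\Sq^!\circ\mu_*$. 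Restricting to the K\"unneth summand $H_{k-1}(\Z)\otimes H_1(\Sone)$ and composing with the quotient to $H_*(\T,C_n(\D))$ produces exactly $\Sq^!\circ\nu$ followed by the quotient, as claimed.

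The main obstacle is the explicit local computation giving the winding number $2$, together with the identification of the covering structure of $N(\Z)\cap\T$ with that of $\mu^*\T$ via the $H^1(-;\mathbb{Z}_2)$ classification of double covers. Once these geometric inputs are in place, the rest is formal naturality of the transfer, and the three bullets drop out of the K\"unneth decomposition.
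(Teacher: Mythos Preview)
Your treatment of the first two bullets is correct and essentially identical to the paper's: the component of $\iota$ landing in $H_k(\Z)$ is induced by the bundle projection $\Z\times\Sone\to\Z$, which is the identity on the $H_k(\Z)$ summand and zero on the other.

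For the third bullet your overall strategy is the right one, and it is close to the paper's, but there is a genuine gap. Your claim that the pullback cover $\mu^*\T\to\Z\times\Sone$ is classified by the class dual to $[\Sone]$ is false. Consider a loop in $\Z=C_{n-1,1}(\D)$ along which the white point $x$ winds once around one black point $z_j$; push it into $\Z\times\{\theta_0\}$ and apply $\mu$. Along the resulting loop in $C_{n,1}(\overline\D)$ the factor $(x-z_j)$ winds once, while $(x-z_n)=-\delta\,\theta_0$ has constant argument, so $y^2=\prod_i(x-z_i)$ winds once and the loop does \emph{not} lift to $\T$. Hence $\mu^*\T$ is nontrivial on loops in the $\Z$-direction as well, and is not isomorphic to the cover $\mathrm{id}\times d_2$. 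Equivalently, with the $\arg(y)$-trivialisation on $\partial N(\Z)$ your square does not commute up to homotopy: the composite $\Sq\circ(\text{incl})$ differs from $\mu\circ(\mathrm{id}\times d_2)$ by the fibrewise twist $\theta\mapsto\theta-\arg\!\bigl(\prod_{i\neq n}(x-z_i)\bigr)$, and this twist is not nullhomotopic as a map $\Z\to\Sone$. Consequently your appeal to naturality of the transfer in a pullback square does not apply as written. (Incidentally, the step you flag as ``where $n$ odd enters'' does not use the parity of $n$; a single black point winding once around $x$ changes the sign of $y$ regardless.)

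The paper avoids this trivialisation trap by working with the \emph{literal} pullback square: it sets $\Z'=\Sq(\Z)\subset C_n(\D)\times\overline\D$, chooses $N(\Z)=\Sq^{-1}(N(\Z'))$, and uses the inclusions $\partial N(\Z)\hookrightarrow\T$ and $\partial N(\Z')\hookrightarrow C_{n,1}(\overline\D)$ as the horizontal maps. Then the square is a pullback on the nose, and from $\pi_N=\pi_{N'}\circ\Sq$ one gets $\pi_N^{!}=\Sq^{!}\circ\pi_{N'}^{!}$ by functoriality of umkehr maps; combined with transfer naturality this yields $\iota|_{\text{summand}}=\Sq^{!}\circ(j')_*\circ\pi_{N'}^{!}$. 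The final identification with $\Sq^{!}\circ\nu$ then holds because the Gysin map $\pi_{N'}^{!}$ is \emph{intrinsic} to the circle bundle $\partial N(\Z')\to\Z'$ and does not depend on the choice of trivialisation, so $(j')_*\circ\pi_{N'}^{!}=\mu_*\circ\pi_{N'}^{!}=\nu$ even though $j'$ and $\mu$ are not homotopic as maps out of a trivialised $\Z'\times\Sone$. Your argument can be repaired along exactly these lines.
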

\begin{proof}
The first two points of the statement come from the behaviour of the map
$\iota\colon H_k(C_{n-1,1}(\D)\times \Sone)\rightarrow H_k(C_{n-1,1}(\D)\times \overline{\D})$ on Kunneth summands.

For the third point, recall that $C_{n-1,1}(\D)\times \Sone$ represents $\partial N(\Z)$, where $N(\Z)\cong\Z\times\overline{\D}$
is a tubular neighborhood of $\Z\simeq C_{n-1,1}(\D)$ in $\V$. Note that the map
$\Sq\colon \T\rightarrow C_{n,1}(\overline{\D})$ extends to a map (which is no longer a covering)
$\Sq\colon \V\rightarrow C_n(\D)\times \overline{\D}$: this map still consists in forgetting $y$.
Let $\Z'\subset C_n\times\overline \D$ be the subspace where the white point (in $\D$)
coincides with one of the $n$ black points; then again $\Z'\simeq C_{n-1,1}(\D)$ and $\Z'$ has a small, closed
tubular neighborhood
$N(\Z')\simeq \Z'\times\overline{\D}\subset C_n(\D)\times\overline{\D}$.

We can choose $N(\Z)$ to be $\Sq^{-1}(N(\Z'))\subset\V$; the map $\Sq\colon N(\Z)\to N(\Z')$ is a 2-fold branched covering,
and it is branched exactly over $\Z'$, which is homeomorphically covered by $\Z$. The restriction
$\Sq\colon \partial N(\Z)\to\partial N(\Z')$ is a genuine 2-fold covering, and agrees with the projections of these
boundaries of tubular neighborhoods on $\Z$ and $\Z'$ respectively (the projection of $N(\Z)$ onto $\Z$ can be chosen
to be the lift of the projection of $N(\Z')$ onto $\Z'\cong\Z$). So we have a commutative diagram
\[
 \begin{tikzcd}[column sep=3em, row sep=3em]
  \partial N(\Z)\simeq \Z\times\Sone \ar[r,"\pi_N"] \ar[d,"\Sq"] & \Z\cong C_{n-1,1}(\D)\ar[d,equal]\\
  \partial N(\Z')\simeq \Z'\times \Sone \ar[r,"\pi_{N'}"] & \Z'\cong C_{n-1,1}(\D)
 \end{tikzcd}
\]
In particular the composition $\pi_{N'}\circ\Sq$ is equal, up to identifying both $\Z$ and $\Z'$ with $C_{n-1,1}(\D)$,
to the map $\pi_{N}$, and in homology we can express the Gysin map $\pi_{N}^!$ as $\Sq^!\circ\pi_{N'}^!$.

We now observe that $\pi_N^!\colon H_{k-1}(\Z)\to H_k(\partial N(\Z))$ is exactly the inclusion of the summand
$H_{k-1}(C_{n-1,1})\otimes H_1(\Sone)\subset H_k(\partial N(\Z))$. The map $\iota$ is the composition of this inclusion
with the maps $H_k(\partial N(\Z))\to H_k(\T)$ induced by $\partial N(\Z)\subset\T$, and then the natural map
$H_k(\T)\to H_k(\T,C_n(\D))$.
 
 On the other hand $\pi_{N'}^!\colon H_{k-1}(\Z')\to H_k(\partial N(\Z'))$ is the inclusion
 \[
 H_{k-1}(C_{n-1,1}(\D))\otimes H_1(\Sone)\subset H_k(\partial N(\Z'))\simeq H_k(C_{n-1,1}(\D)\times\Sone);
 \]
 and the map
 $\nu\colon H_{k-1}(C_{n-1,1}(\D))\otimes H_1(\Sone)\to H_k(C_{n,1}(\D))$ is exactly this inclusion,
followed by the map $\mu_*\colon H_k(\partial N(\Z'))\to H_k(C_{n,1}(\overline{\D})$.
\end{proof}

We are now ready to prove theorem \ref{thm:fourtorsion}. We pick any class $a\in H_k(\V,C_n(\D))$,
and map it to $H_{k-1}(\partial N(\Z))$ along the long exact sequence of lemma \ref{lemma:MVsuTZ};
we get some class $b+c$, where $b\in H_{k-1}(C_{n-1,1}(\D))$ and $c\in H_{k-2}(C_{n-1,1}(\D))\otimes H_1(\Sone)$.
Then $\iota(b+c)$ must be zero, hence its first component, lying in $H_{k-1}(C_{n-1,1}(\D))$, must be zero;
therefore $b=0$ by the first two points of lemma \ref{lemma:behaviourofiota}.

Similarly $\iota(c)=0$, so also $\Sq_*\circ\iota(c)=0\in H_{k-1}(C_{n,1}(\overline{\D},C_n(\D))$; by the third point
of lemma \ref{lemma:behaviourofiota} this is equal to the image of $c$ under the map
\[
\begin{array}{c}
 H_{k-2}(C_{n-1,1}(\D))\otimes H_1(\Sone) \overset{\nu}{\rightarrow} H_{k-1}(C_{n,1}(\overline{\D}) \rightarrow\\
 \rightarrow H_{k-1}(C_{n,1}(\overline{\D}),C_n(\D)) \overset{\cdot 2}{\rightarrow} H_{k-1}(C_{n,1}(\overline{\D}),C_n(\D))
 \end{array}
 \]
As the composition of the first two maps is an isomorphism (see lemma \ref{lem:Vassiliev}) and clearly
multiplication by $2$ commutes with any maps of abelian groups, we have that $2c=0$.

Therefore $2a$ is in the kernel of the map $H_k(\V,C_n(\D))\rightarrow H_{k-1}(\partial N(\Z))$, so
it is in the image of the map $H_k(C_{n-1,1}(\D))\oplus H_k(\T,C_n(\D))\rightarrow H_k(\V,C_n(\D))$.

Let $d+e\mapsto 2a$, where $d\in H_k(C_{n-1,1}(\D))$ and $e\in H_k(\T,C_n(\D))$: we want now to show
that $2d+2e$ is in the image of $\iota$. Since $\iota(d+0)=d+h$ for some $h\in H_k(\T,C_n(\D))$, we
have $\iota(2d+0)=2d+2h$ so it suffices to find $i\in H_{k-1}(C_{n-1,1}(\D))\otimes H_1(\Sone)$
such that $\iota(i)=2e-2h$.

As $2e-2h=2(e-h)$ is twice an element in $H_k(\T,C_n(\D))\subset H_k(\T)$, by corollary
\ref{cor:twoproperties} there is an element $j+i\in H_k(C_{n,1}(\overline{\D}))$ such that
$\Sq^!(j+i)=2e-2h$, for some $j\in H_k(C_n(\D))$ and some $i\in H_{k-1}(C_{n-1,1}(\D))\otimes H_1(\Sone)
\simeq H_k(C_{n,1}(\overline{\D}),C_n(\D))$
(using again lemma \ref{lem:Vassiliev}). We now observe that the composition
\[
 H_k(C_n(\D)) \rightarrow H_k(C_{n,1}(\overline{\D})) \overset{\Sq^!}{\rightarrow} H_k(\T)
 \]
is equal to the composition
\[
  H_k(C_n(\D)) \overset{\cdot 2}{\rightarrow}  H_k(C_n(\D)) \subset H_k(\T),
\]
and in particular its image lies in the summand $H_k(C_n(\D)) \subset H_k(\T)$.

Indeed the covering $\Sq$ is the trivial covering over $C_n(\D)\subset C_{n,1}(\overline{\D})$,
with sections $\phi^*s_0\colon C_n(\D)\rightarrow \T$ and $\varepsilon\circ \phi^*s_0\colon C_n(\D)\rightarrow \T$,
and these sections are homotopic as maps $C_n(\D)\rightarrow \T$ by lemma \ref{lemma:varepsilon=id}.

Therefore we must have $\Sq^{!}(j)=0$ and we may assume $j=0$. It follows that $\iota(i)=2e-2h$, so the class $2d+2e$ is
in the image of $\iota$ and must therefore also be in the kernel of the map
$H_k(C_{n-1,1}(\D))\oplus H_k(\T,C_n(\D))\rightarrow H_k(\V,C_n(\D))$: this exactly means that
$4a=0\in H_k(\V,C_n(\D))$, and theorem \ref{thm:fourtorsion} now follows from the isomorphism
$H_k(\V,C_n(\D))\simeq H_{k-1}(\Br_n;\phi^*\H)$.

\bibliographystyle{plain}
\bibliography{bibliography.bib}{}

\end{document}